\DeclareSymbolFontAlphabet{\amsmathbb}{AMSb}
\declaretheoremstyle[bodyfont=\normalfont]{normalbody}
\declaretheorem[numberwithin=section,name=Theorem]{theorem}
\declaretheorem[sibling=theorem,style=normalbody,name=Definition]{definition}
\declaretheorem[sibling=theorem,name=Corollary]{corollary}
\declaretheorem[sibling=theorem,name=Lemma]{lemma}
\declaretheorem[sibling=theorem,name=Proposition]{proposition}
\declaretheorem[sibling=theorem,style=normalbody,name=Example]{example}
\declaretheorem[sibling=theorem,style=normalbody,name=Examples]{examples}
\declaretheorem[sibling=theorem,style=normalbody,name=Remark]{remark}
\declaretheorem[sibling=theorem,style=normalbody,name=Remarks]{remarks}
\declaretheorem[sibling=theorem,style=normalbody,name=Question]{question}
\declaretheorem[numbered=no,name=Theorem]{theorem-no}
\declaretheorem[numbered=no,name=Corollary]{corollary-no}
\newcommand{\Z}{\mathbb{Z}}
\newcommand{\N}{\mathbb{N}}
\newcommand{\Q}{\mathbb{Q}}
\newcommand{\R}{\mathbb{R}}
\newcommand{\G}{\mathbb{G}}
\newcommand{\s}[1]{\mathcal{#1}}
\newcommand{\Hom}{\operatorname{Hom}}
\newcommand{\Ext}{\operatorname{Ext}}
\newcommand{\Br}{\operatorname{Br}}
\newcommand{\coker}{\operatorname{coker}}
\newcommand{\sm}{\operatorname{sm}}
\newcommand{\sHom}{\underline{\operatorname{Hom}}}
\newcommand{\Spec}{\operatorname{Spec}}
\newcommand{\id}{\operatorname{id}}
\newcommand{\Ab}{\mathbf{Ab}}
\newcommand{\im}{\mathrm{Im}\mkern1mu}
\newcommand{\cts}{\mathrm{cts}}
\newcommand{\LCA}{\mathbf{LCA}}
\newcommand{\Haus}{\mathrm{Haus}}
\newcommand{\TAb}{\mathbf{TAb}}
\titleformat{\section}[block]
		{\normalsize\sc}{\thesection}{1em}{\centering\normalsize}
\titleformat{\subsection}[block]
		{\normalsize\bf}{\thesubsection}{1em}{\centering\normalsize}
\tikzset{
  symbol/.style={
    draw=none,
    every to/.append style={
      edge node={node [sloped, allow upside down, auto=false]{$#1$}}}
  }
}
\def\expandafter\normalsize\expandafter{%
	\normalsize%
	\setlength\abovedisplayskip{.5em}%
	\setlength\belowdisplayskip{.5em}%
	\setlength\abovedisplayshortskip{.5em}%
	\setlength\belowdisplayshortskip{.5em}%
}
\title{Topologies on abelian groups and a topological five-lemma}
\author{Felipe Rivera-Mesas}
\address{Departamento de Matemáticas, Facultad de Ciencias, Universidad de Chile}
\email{felipe.rivera.m at ug.uchile.cl}
\thanks{ {\sl Keywords:} Abelian topological groups, Five-Lemma, topological extensions \\
	\mbox{\hspace{1.3em}} {\sl MSC codes (2020):} 22B05. \\
	\mbox{\hspace{1.3em}} This work was partially supported by the Beca de Doctorado Nacional ANID folio 21210171, awarded in 2021.}
\begin{document}
	
\begin{abstract}
In this article we establish some results that allow to deduce the continuity of homomorphisms of (topological) abelian groups from commutative diagrams. In particular, we present a new topological version of the classical Five-Lemma. These results aim to be applied in duality results between cohomology groups in arithmetical contexts. In such a topological-arithmetical context, Pontryagin duality plays a central role and it becomes necessary to know whether certain homomorphisms are continuous.
\end{abstract}

\maketitle
	
\section{Introduction}

Given two abelian topological groups $A$ and $B$, a \emph{topological extension} of $B$ by $A$ is a topological group $G$ that fits in a exact sequence of groups
	\[ 0 \to A \overset{\iota}{\to} G \overset{\pi}{\to} B \to 0, \]
where $\iota$ is a topological embedding and $\pi$ is a quotient map. Let $\TAb$ be the category of abelian topological groups, whose objects are abelian topological groups and morphisms are continuous homomorphisms. We will refer an \emph{extension of abelian topological groups} (or an \emph{extension in $\TAb$}) as a topological extension of the abelian topological groups involved. Every extension in $\TAb$ induces, by forgetting the topology, an extension in $\Ab$. Conversely, an interesting question arises: 

\begin{question} \label{question}
	Given $A,B\in\TAb$ and the following extension in $\Ab$ 
	\begin{equation} \label{exact sequence}
		 0 \to A \overset{\iota}{\to} G \overset{\pi}{\to} B \to 0.
	\end{equation}
	\begin{center}
	\sl Can $G$ be topologized such that \eqref{exact sequence} becomes an extension in $\TAb$?
	\end{center} 
\end{question}

Every extension in $\Ab$, as \eqref{exact sequence}, is codified by a factor set $h_s:B\times B\to A$ induced by a set-theoretic section $s$ of $\pi$ satisfying $s(0_B)=0_G$ (see \cite[\S 6.6]{Weibel}). More precisely,
	\[ h_s(b,b')=s(b)+s(b')-s(b+b') \]
induces a group operation $+_s$ on $A\times B$ via
	\[ (a,b) +_s (a',b') := (a+a'+h_s(b,b'),b+b'). \]
Thus, $G$ is isomorphic to $(A\times B,+_s)$ via
	\begin{align*}
	\begin{split}
	\theta_s:(A\times B,+_s) &\to G \\
	(a,b) &\mapsto \iota(a)+s(b).
	\end{split}
	\end{align*}
Then, when $h_s$ is continuous at $(0_B,0_B)$, the group $(A\times B,+_s)$ becomes a topological group equipped with the topology $\tau$ whose basis of neighborhoods of $(0_A,0_B)$ is the product of open neighborhoods $U\times V$ of $0_A$ and $0_B$, respectively. Thus,
	\[ 0 \to A \to (A\times B,+_s,\tau) \to B \to 0 \]
becomes an extension in $\TAb$. Hence, $G$ equipped with the topology $\tau_s$, induced by $\theta_s$, answers affirmatively to the Question \ref{question}. Further, the section $s:B\to(G,\tau_s)$ becomes continuous at $0_B$. On the other hand, choosing a topology $\tau$ on $G$ such that $s:B\to(G,\tau)$ is continuous at $0_B$, we also answer affirmatively to Question \ref{question} (see \cite[\S 4]{Calabi}). For instance, when $B$ is discrete, there exists a unique topology on $G$ answering affirmatively to Question \ref{question} (see Corollaire 2 of Proposition 4.1 in \emph{loc. cit.}). \\

Hirosi Nagao proved that if $A,B\in\TAb$ are first countable, then every topological extension $G$ of $B$ by $A$ comes from the choice of a set-theoretical section $s:B\to G$, satisfying $s(0_B)=0_G$, continuous at $0_B$ (\cite[Theorem 2]{Nagao}). Thus, when $A$ and $B$ in Question \ref{question} are first countable, the problem is equivalent to finding a convenient topology on $G$ for which some set-theoretic section $s:B\to G$, satisfying $s(0_B)=0_G$, is continuous at $0_B$. \\

George Mackey proved that if $A,B\in\TAb$ are Hausdorff, locally compact and second countable, then every topological extension $G$ of $B$ by $A$ comes from the choice of a set-theoretical section $s:B\to G$, satisfying $s(0_B)=0_G$, that is borelian (\cite[Theorem 2 and 3]{Mackey57}). This result was extended by Calvin Moore to the case where $A$ is a polish group and he proved that $s$ may be borelian almost everywhere (see \cite[Theorem 10]{Moore}). Recall that $G$ is a polish group if its underlying topological space is a second countable complete metric space. Thus, when $A$ and $B$ in Question \ref{question} are as in Moore's, the problem is equivalent to find a convenient topology on $G$ for which some set-theoretic section $s:B\to G$, satisfying $s(0_B)=0_G$, is borelian almost everywhere.

\vspace{1em}

In this paper topological groups are {\it not} assumed to be Hausdorff. In particular, we will work with non-Hausdorff locally compact topological abelian groups. In this context, a topological group is \emph{locally compact} if the identity element has a neighborhood basis composed by compact neighborhoods. Recall that every abelian topological group $G$ has a universal Hausdorff quotient $G_{\Haus}:=G/\overline{\{0_G\}}$. Also recall that a continuous homomorphism $f:G\to H$ is \emph{strict} if the induced map $\tilde{f}:G/\ker f\to\im f$ is a topological isomorphism, where $G/\ker f$ has the quotient topology and $\im f$ has the subspace topology induced from $H$ (see Definition \ref{strict def}). \\

The following result is aimed to be applied to the problem of ensuring the continuity of homomorphisms between topological groups without explicitly verifying this property.

\begin{theorem-no}[Proposition \ref{Five-Lemma nagao}]
	Let
	\begin{equation*}
		\begin{tikzcd}
			\s{E}_1: 0 \ar{r} & A_1 \ar{r}{\iota_1} \ar{d}{\alpha} & G_1 \ar{r}{\pi_1} \ar{d}{\gamma} & B_1 \ar{r} \ar{d}{\beta} & 0 \\
			\s{E}_2: 0 \ar{r} & A_2 \ar{r}{\iota_2} & G_2 \ar{r}{\pi_2} & B_2 \ar{r} & 0
		\end{tikzcd}
	\end{equation*}
	be a commutative diagram of abelian groups. Assume that the rows of the above diagram are topological extensions of first countable and locally compact abelian topological groups. Then, in each of the following cases:
	\begin{enumerate}[nosep, label=\alph*)]
		\item $B_1$ is discrete or
		\item $A_2$ is Hausdorff compact and $\s{E}_1$ satisfies some of the following conditions
			\begin{enumerate}[nosep, label=\roman*)]
				\item $A_1$ is Hausdorff compact or
				\item $B_1$ is Hausdorff and $A_1$ is second countable,
			\end{enumerate}
	\end{enumerate}
	the map $\gamma_{\Haus}$ is well-defined and continuous (resp. continuous and strict) whenever $\alpha$ and $\beta$ are continuous (resp. continuous and strict). In particular, if we further assume that $G_2$ is Hausdorff, then $\gamma$ is continuous (resp. continuous and strict) if, and only if, $\alpha$ and $\beta$ are continuous (resp. continuous and strict).
\end{theorem-no}

As a consequence of the Theorem above, we show a topological version of the well-known Five-Lemma in homological algebra. In the literature there do exist topological versions of this lemma, for instance, in the category $\LCA$ of Hausdorff locally compact abelian topological groups (see Proposition 2.8 in \cite{Fulp}) or in the category of Banach spaces (see Theorem of Section 3 in \cite{Pryde}). The results just mentioned make some topological assumptions on the maps that appear in the Five-Lemma diagram. In particular, both results are stated assuming that all maps are continuous homomorphisms. The result that we present below differs from these results since for us the middle vertical arrow is just a homomorphism of abelian groups. In particular, the continuity of the vertical middle arrow is not defined a priori.

\begin{corollary-no}[Topological Five-Lemma]
	Let
	\begin{equation*} 
		\begin{tikzcd}
			A_1 \ar{r}{f_1} \ar{d}{\alpha} & B_1 \ar{r}{g_1} \ar{d}{\beta} & C_1 \ar{r}{h_1} \ar{d}{\gamma} & D_1 \ar{r}{k_1} \ar{d}{\delta} & E_1 \ar{d}{\epsilon} \\
			A_2 \ar{r}{f_2} & B_2 \ar{r}{g_2} & C_2 \ar{r}{h_2} & D_2 \ar{r}{k_2} & E_2
		\end{tikzcd}
	\end{equation*}
	be a commutative diagram of abelian groups, where the rows are strict exact sequences (see Definition \ref{strict sequence}) of locally compact abelian topological groups. Suppose that
	\begin{enumerate}[nosep,label=\roman*)]
		\item $B_i$ and $D_i$ are first countable,
		\item $\beta$ and $\delta$ are topological isomorphisms,
		\item $\epsilon$ is injective and
		\item $\alpha$ is surjective.
	\end{enumerate}
	Then, in each of the following cases
	\begin{enumerate}[nosep, label=\alph*)]
		\item $D_i$ is discrete; or
		\item $B_i$ is Hausdorff and compact,
	\end{enumerate}
	$\gamma_{\Haus}$ is well-defined and it is a quotient map. Whence $\gamma$ is a topological isomorphism when $C_2$ is Hausdorff.
\end{corollary-no}

Our topological version of the Five-Lemma can be applied to prove the continuity of certain Yoneda pairings between (hyper)cohomology groups of complexes of \'etale abelian sheaves in the context of local Tate duality results (cf. for instance \cite[Theorem 2.3]{HS05}). \\

\section*{Acknowledgments}

I would like to thank my advisors, Cristian Gonz\'alez Avil\'es and Giancarlo Lucchini Arteche, for their constant support while writing this article. I also thank {\it Agencia Nacional de Investigaci\'on y Desarrollo} (ANID) for funding my research via Beca de Doctorado Nacional ANID folio 21210171. 


\section{Basics on topological groups}

We start by recalling some definitions.

\begin{definition} \label{strict def}
	A continuous homomorphism of abelian topological groups $f:A\to B$ is called \emph{strict} if the image of every open subset is open in the image $f(A)$ with the subspace topology in $B$, or equivalently (see \cite[III, \S 2.8, p. 236, Proposition 24]{GTBou}), if the map $\tilde{f}:A/\ker f\to f(A)$ is an isomorphism of topological groups, where $A/\ker f$ has the quotient topology.
\end{definition}

\begin{examples} \label{strict}
	Let $f:A\to B$ be a continuous homomorphism of abelian topological groups. Then $f$ is strict in the following cases:
	\begin{enumerate}[nosep, label=\alph*)] 
		\item $A$ is an open subgroup of $B$ and $f$ is the canonical inclusion. However, $f$ can be non-strict if $A$ is not open, e.g., when $A=\Z$ is equipped with the discrete topology and $B=\Z_p$ is equipped with $p$-adic topology. Indeed, $\Z$ equipped with the $p$-adic topology is dense in $\Z_p$ and therefore cannot be discrete since $\Z_p$ is Hausdorff. More generally, $f$ is strict whenever $f$ is open.
		\item $B$ has the discrete topology;
		\item $A$ is compact and $B$ is Hausdorff (see \cite[III, \S 2.8, p. 237, Remark 1]{GTBou});
		\item $A$ is locally compact and second countable, $B$ is Hausdorff and locally compact and $\im f$ is closed in $B$ (see \cite[p. 42, Theorem 5.29]{Hewitt}).
	\end{enumerate}
\end{examples}

\begin{remark} \label{closed image}
	Let $f:A\to B$ be a homomorphism of topological groups, where $A$ is locally compact and $B$ is Hausdorff. If $f$ is strict, then $\im f$ is a closed subgroup of $B$. Indeed, since $A/\ker f$ is locally compact, $\im f$ is a locally compact subgroup of $B$, which is necessarily closed by \cite[p. 45, Corollary 4.7]{Stroppel}. 
\end{remark}

\begin{definition} \label{strict sequence}
	An exact sequence of abelian topological groups
	\[ \cdots \to A_{i-1} \overset{f_{i-1}}{\to} A_i \overset{f_i}{\to} A_{i+1} \to \cdots \]
	is called \emph{strict exact} if every map $f_i$ is strict. A short strict exact sequence of abelian topological groups
	\[ 0 \to A \to B \to C \to 0 \]
	is called a \emph{topological extension}.
\end{definition}

\begin{remark} \label{extension properties}
	Given a topological extension $0\to A \to B \to C \to 0$ of abelian topological groups, we can view $A$ as a subgroup of $B$ with the subspace topology and therefore $C$ is homeomorphic to $B/A$ equipped with the quotient topology. Note that $B/A$ is Hausdorff if and only if $A$ is a closed subgroup of $B$.
\end{remark}

There are some topological properties that are preserved by topological extension, that is, given a topological extension of abelian topological groups 
	\[ 0\to A\to B\to C \to 0, \]
where $A$ and $C$ satisfy such a property, then $B$ also satisfies it. These properties are called \emph{extension properties}. For example, the property of being
\begin{enumerate}[nosep, label=(\arabic*)]
	\item Hausdorff;
	\item first countable;
	\item profinite;
	\item locally compact;
	\item discrete; or
	\item totally disconnected,
\end{enumerate}
is an extension property. For $(2)$ see \cite[p 47, 5.38.(e)]{Hewitt}; for $(3)$-$(6)$ see \cite[p. 58, Theorem 6.15]{Stroppel}\footnote{In \cite{Stroppel}, strict morphisms are called \emph{proper morphisms}.}. For (1) we have to prove that $\{0\}$ is closed in $B$. Since $C$ is Hausdorff and the projection $B\to C$ is continuous, $A$ is a closed subgroup of $B$ and the closure of $\{0\}$ in $B$ is contained in $A$. Hence $\{0\}$ is closed in $B$ since $A$ is Hausdorff.


\subsection*{Pontryagin dual}

Let $A$ and $B$ be two abelian topological groups. We denote by $\Hom_{\cts}(A,B)$ the group of continuous homomorphisms from $A$ to $B$. The group $\Hom_{\cts}(A,B)$ can be endowed with a natural topology, as follows: given a compact subset $K$ of $A$ and an open subset $U$ of $B$, we define $V(K,U)$ as
\[ V(K,U) := \{f:A\to B\text{ continuous}\mid f(K)\subseteq U\}. \]
The family $\{V(K,U)\}_{K,U}$ generates a topology on the group of all the continuous maps $C(A,B)$ from $A$ to $B$ which is called the \emph{compact-open topology on} $C(A,B)$. Then $\Hom_{\cts}(A,B)$ is endowed with the subspace topology inherehited from $C(A,B)$.

\begin{remark}
	When $B$ is Hausdorff, the topological group $\Hom_{\cts}(A,B)$ is Hausdorff, even if $A$ is not (see \cite[p. 91, Lemma 9.2.(c)]{Stroppel}).
\end{remark}

Let $\mathbb{T}$ be the unit circle $\R/\Z$ with the quotient topology induced by $\R$ with its usual metric topology. The torsion subgroup of $\mathbb{T}$ is $\Q/\Z$ and has discrete topology (see \cite[p. 31, Corollary 3.13]{Stroppel}).

\begin{definition}
	For $A$ an abelian topological group, we define \emph{the Pontyagin dual of $A$} as the abelian topological group $A^*:=\Hom_{\cts}(A,\mathbb{T})$.
\end{definition}

\begin{remark}
	Since $\mathbb{T}$ is Hausdorff, $A^*$ is Hausdorff for any (possibly non-Hausdorff) $A$.
\end{remark}

Let $\LCA$ be the category of locally compact Hausdorff abelian topological groups whose morphisms are the continuous group homomorphisms. The asignment $A\to A^*$ defines a contravariant funtor $(-)^*:\LCA\to\LCA$. On morphisms, this functor acts by right-composition, that is, it sends a continuous homomorphism $f:G\to H$ to the morphism $f^*:H^*\to G^*$ defined by $f^*(g:H\to\mathbb{T}):=g\circ f$. Observe that this definition does not depend on a topological property of the morphisms themselves. Furthermore, this functor satisfies the following  important and well-known result.

\begin{proposition}[Pontryagin duality theorem] \label{Pontryagin duality theorem}
	The functor $(-)^*\circ(-)^*:\LCA\to\LCA$ is naturally isomorphic to the identity functor $\id_{\LCA}$. Morever, if $\s{E}:0 \to A \to B \to C \to 0$ is a topological extension in $\LCA$, the dual sequence
	\[ \s{E}^*:0\to C^* \to B^* \to A^* \to 0 \]
	is also a topological extension. Furthermore, the functor $(-)^*:\LCA\to\LCA$ establishes an anti-equivalence between the category of discrete and torsion abelian groups and the category of profinite abelian groups.
\end{proposition}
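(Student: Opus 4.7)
My plan is to establish the three assertions in turn, organized around the evaluation map $\text{ev}_A : A \to A^{**}$, $a \mapsto (\chi \mapsto \chi(a))$, which is the candidate natural isomorphism for part (1).

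For the biduality, I would first verify that $\text{ev}_A$ is a continuous group homomorphism natural in $A$; continuity reduces to joint continuity of the pairing $A \times A^* \to \mathbb{T}$ on compact subsets, which is built into the compact-open topology on $A^*$. The main work is showing $\text{ev}_A$ is a homeomorphism. I would invoke the Pontryagin--van Kampen structure theorem: every $A \in \LCA$ is topologically isomorphic to $\R^n \oplus H$, where $H$ admits a compact open subgroup. This reduces biduality to the three cases $A = \R$, $A$ compact, and $A$ discrete. The case $A = \R$ is the classical self-duality $s \mapsto (t \mapsto e^{2\pi i ts})$. For compact (resp.\ discrete) $A$, one writes $A$ as an inverse (resp.\ direct) limit of finite quotients (resp.\ finitely generated subgroups) and reduces to the self-duality of finite abelian groups, with point separation supplied by the Peter--Weyl theorem (resp.\ by injectivity of $\mathbb{T}$ as an abstract abelian group).

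For the exactness of $\s{E}^*$, write $\s{E}: 0 \to A \overset{\iota}{\to} B \overset{\pi}{\to} C \to 0$. Injectivity of $\pi^*$ is immediate since $\pi$ is surjective. The inclusion $\pi^*(C^*) \subseteq \ker \iota^*$ is functoriality; conversely, any continuous $g : B \to \mathbb{T}$ vanishing on $\iota(A)$ descends through the topological quotient $\pi$ to a continuous $\tilde{g} : C \to \mathbb{T}$ with $\pi^*(\tilde{g}) = g$. Surjectivity of $\iota^*$ is the character extension theorem: any continuous character of the closed subgroup $\iota(A)$ extends to $B$, proved by a Zorn--Hahn--Banach type argument using divisibility of $\mathbb{T}$. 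Strictness of $\pi^*$ and $\iota^*$ then follows from standard subgroup--quotient duality in $\LCA$: for a closed subgroup $A \subseteq B$, the annihilator $A^{\perp} \subseteq B^*$ is topologically identified with $(B/A)^*$ and the quotient $B^*/A^{\perp}$ with $A^*$, which simultaneously delivers openness of $\pi^*$ onto its image and of $\iota^*$ onto $A^*$.

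For the anti-equivalence, a profinite group $P = \varprojlim_i P_i$ (with $P_i$ finite) dualizes to $P^* \cong \varinjlim_i P_i^*$, a filtered colimit of finite discrete groups, hence a discrete torsion abelian group; note that a continuous character of $P$ automatically factors through some finite $P_i$ by compactness. Conversely, a discrete torsion abelian group $D = \varinjlim_j D_j$, where $D_j$ runs over the finite subgroups of $D$, dualizes to $\varprojlim_j D_j^*$, which is profinite. That this restriction of $(-)^*$ is an anti-equivalence is then a formal consequence of the biduality of part (1). The hardest step throughout will be showing in part (1) that $\text{ev}_A$ is an open map; this is the obstacle that forces appeal to the nontrivial Pontryagin--van Kampen structure theorem and to the Peter--Weyl theorem.
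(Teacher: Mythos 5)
The paper does not actually prove this proposition: its ``proof'' consists of three citations (\cite[p.~193, Theorem 22.6]{Stroppel} for biduality, \cite[p.~195, Corollary 23.4]{Stroppel} for exactness of the dual sequence, and \cite[p.~62, Theorem 2.9.6]{profinite} for the anti-equivalence), the result being treated as classical background. Your outline is essentially the standard route taken in those references --- structure theorem, reduction to $\R$, compact, and discrete; character extension via divisibility of $\mathbb{T}$; limit arguments for the profinite/torsion correspondence --- so at the level of architecture you are reconstructing the cited proofs rather than doing something different.

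That said, one step is stated incorrectly, and in a way that contradicts the very proposition you are proving. You assert that a compact abelian group is an inverse limit of its finite quotients; that property characterizes \emph{profinite} groups, and if it held for all compact abelian groups then the third assertion would force every compact abelian group to be profinite and, dually, every discrete abelian group to be torsion --- refuted by $\mathbb{T}$ and $\Z$. The correct reduction presents a compact abelian group as an inverse limit of compact Lie groups $\mathbb{T}^n\times F$; likewise a discrete group is a colimit of finitely generated subgroups $\Z^n\times F$, so you need the duality between $\Z$ and $\mathbb{T}$ and not only that of finite groups. Two further compressions hide real content: (i) passing from the compact and discrete cases to a group with a compact open subgroup already uses the exactness of $(-)^*$ on short exact sequences, so parts (1) and (2) cannot be established in the strict order you propose; (ii) in the character extension theorem, Zorn's lemma plus divisibility of $\mathbb{T}$ only yields an \emph{abstract} extension of a character --- the continuity of the extension, and hence the strictness (openness onto image) of $\iota^*$ and $\pi^*$, is the genuinely hard point and is usually obtained from the open-subgroup and compact cases together with the structure theory. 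None of this casts doubt on the theorem, but as a self-contained argument the sketch has gaps exactly where the classical proofs are difficult, which is why the paper defers to \cite{Stroppel} and \cite{profinite}.
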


\begin{proof}
	For the first assertion, see \cite[p. 193, Theorem 22.6]{Stroppel}; for the second, see \cite[p. 195, Corollary 23.4]{Stroppel}. For the last, see \cite[p. 62, Theorem 2.9.6]{profinite}.
\end{proof}

\begin{remark} \label{continuity pontryagin}
	Let $f:G\to H$ be a homomorphism of abelian groups, where $G$ and $H$ are objects in $\LCA$. Then $f$ is continuous if, and only if, $f^*$ is continuous.
\end{remark}


\subsection*{Separation functor}

The importance of the hypothesis of being Hausdorff in the Pontryagin duality lies on the following simple fact: for every continuous homomorphism $f:A\to B$, where $B$ is Hausdorff, we have $\overline{\{0_A\}}\subseteq\ker f$. Thus, if the evaluation map $A\to A^{**}$ (which is the map that defines the natural isomorphism in the Pontryagin duality) is an isomorphism of topological groups, then $\overline{\{0_A\}}$ is trivial and, therefore, $A$ is Hausdorff. We may, however, treat the non-Hausdorff groups by using the following well-known construction. 

\begin{definition} \label{haus def}
	For an abelian topological group $G$, we define \emph{the separation of $G$} as the quotient $G_{\Haus}:=G/\overline{\{0_G\}}$. We denote by $q_G:G\to G_{\Haus}$ the canonical projection.
\end{definition}

Let $G$ be an abelian topological group. The separation $G_{\Haus}$ is universal for continuous homomorphisms from $G$ to Hausdorff abelian topological groups. That is, every continuous homomorphism $f:G\to H$ with $H$ Hausdorff factors through $q_G$. Furthermore, the assignment $G\mapsto G_{\Haus}$ defines a functor from the category of abelian topological groups to the category of Hausdorff abelian topological groups and has the following properties:
\begin{enumerate}[nosep, label=\roman*.]
	\item if $f:G\to H$ is a continuous and strict homomorphism, then $f_{\Haus}:G_{\Haus}\to H_{\Haus}$ is continuous and strict (see \cite[Lemma 1.9.(i)]{GA24});
	\item if $f:G\to H$ is a continuous and surjective homomorphism, then $f_{\Haus}:G_{\Haus}\to H_{\Haus}$ is continuous and surjective; and
	\item the projection $q_G:G\to G_{\Haus}$ induces canonical isomorphisms $q_G^*:(G_{\Haus})^*\to G^*$ and $q_G^\wedge:G^\wedge\to (G_{\Haus})^\wedge$, where $(-)^\wedge$ denotes the profinite completion functor. To see that $q_G^\wedge$ is an isomorphism, see Lemma 1.9.(ii) in \emph{loc. cit.}
\end{enumerate}
In particular, if $G$ is locally compact, then $(G^*)^*$ is canonically isomorphic to $G_{\Haus}$. Furthermore, for every continuous homomorphism $f:G\to H$, where $G$ and $H$ are locally compact, the Pontryagin duality functor sends $f_{\Haus}$ to $f^{**}$. Thus, $f_{\Haus}$ is continuous if, and only if, $f^*$ is continuous. Unfortunately, the functor $(-)_{\Haus}$ is not exact and care is needed when discussing the Pontryagin duality theorem in a context where non-Hausdorff groups intervene. At any rate, under suitable hypotheses on a topological extension $\s{E}:0\to A\to G\to B\to 0$, we can ensure that
	\[ \s{E}_{\Haus}: 0\to A_{\Haus}\to G_{\Haus} \to B_{\Haus} \to 0 \]
is a topological extension. To do that, we will use the following lemma.

\begin{lemma} \label{strictness and injectivity}
	Let the following commutative diagram in $\TAb$
	\begin{equation*}
		\begin{tikzcd}
			A \ar[hook]{r}{f} \ar[hook]{d}[swap]{\alpha} & B \ar[hook]{d}{\beta} \\ 
			A' \ar[hook]{r}{g} & B'
		\end{tikzcd}
	\end{equation*}
	where all maps are injective. Suppose that $f,g$ and $\beta$ are strict. Then $\alpha$ is strict.
\end{lemma}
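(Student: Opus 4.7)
The plan is to chase an arbitrary open set around the square. Since $\alpha$ is already injective, strictness of $\alpha$ reduces to showing that for every open $U\subseteq A$, the image $\alpha(U)$ is open in $\alpha(A)$ with the subspace topology inherited from $A'$.

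First I would use the strictness of $f$: there exists an open $V\subseteq B$ such that $f(U)=V\cap f(A)$. Pushing this into $B'$ via $\beta$, injectivity of $\beta$ gives $\beta(f(U))=\beta(V)\cap\beta(f(A))$. Strictness of $\beta$ then produces an open $W\subseteq B'$ with $\beta(V)=W\cap\beta(B)$, and since $\beta(f(A))\subseteq\beta(B)$ one obtains
\[
\beta(f(U)) \;=\; W\cap\beta(f(A)).
\]
By commutativity, $\beta\circ f = g\circ\alpha$, so $\beta(f(A))=g(\alpha(A))$ and $\beta(f(U))=g(\alpha(U))$. Hence
\[
g(\alpha(U)) \;=\; W\cap g(\alpha(A)),
\]
which shows $g(\alpha(U))$ is open in $g(\alpha(A))$ for the subspace topology inherited from $B'$.

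Finally I would invoke the strictness (and injectivity) of $g$, which means that $g\colon A'\to g(A')\subseteq B'$ is a homeomorphism onto its image. By transitivity of the subspace topology, the subspace topology on $g(\alpha(A))$ from $B'$ equals that from $g(A')$, and under the homeomorphism $g^{-1}\colon g(A')\to A'$ the set $g(\alpha(U))$ corresponds to $\alpha(U)$ and $g(\alpha(A))$ to $\alpha(A)$. Therefore $\alpha(U)$ is open in $\alpha(A)$ with the subspace topology from $A'$, which is exactly the strictness of $\alpha$.

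The proof is essentially bookkeeping with the definitions; the only place where one needs to be careful is to use injectivity of $\beta$ at the right moment so that $\beta(V\cap f(A))=\beta(V)\cap\beta(f(A))$, and to note that the two possible subspace topologies on $g(\alpha(A))$ (one coming from $B'$, one coming from $g(A')\cong A'$) agree by transitivity. No additional hypothesis such as Hausdorffness, local compactness, or countability is used.
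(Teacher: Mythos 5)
Your proof is correct and follows essentially the same route as the paper's: use strictness of $f$ to get $V$, injectivity of $\beta$ to commute $\beta$ with the intersection, strictness of $\beta$ to get $W$, and then transport back through $g$. The only cosmetic difference is the last step, where the paper simply takes preimages under $g$ (using only its continuity, via $\alpha(U)=g^{-1}(W)\cap\alpha(A)$), whereas you invoke the full strictness of $g$ to view it as a homeomorphism onto its image; both are valid.
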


\begin{proof}
	Since $g$ is injective, the following equality of sets holds
	\[ \alpha(X)=g^{-1}(\beta(f(X))) \]
	for every subset $X\subseteq A$. Let $U\subseteq A$ an open subset.	Since $f$ is strict, there exists an open subset $V\subseteq B$ such that $f(U)=V\cap f(A)$. Then,
	\[ \alpha(U)=g^{-1}(\beta(V\cap f(A))) \overset{(\ast)}{=} g^{-1}(\beta(V)) \cap g^{-1}(\beta(f(A))) = g^{-1}(\beta(V)) \cap \alpha(A), \]
	where equality $(\ast)$ follows from the injectivity of $\beta$. Now, since $\beta$ is strict, there exists an open subset $W\subseteq B'$ such that $\beta(V)=\beta(B)\cap W$. Then,
	\[ \alpha(U) = g^{-1}(\beta(B)\cap W) \cap \alpha(A) = g^{-1}(\beta(B))\cap g^{-1}(W) \cap \alpha(A). \]
	Observe that $g^{-1}(\beta(B))$ contains $\alpha(A)$. Thus,
	\[ \alpha(U) = g^{-1}(\beta(B)\cap W) \cap \alpha(A) = g^{-1}(W) \cap \alpha(A). \]
	Hence, $\alpha(U)$ is open in $\alpha(A)$ since $g^{-1}(W)$ is an open subset of $A'$.
\end{proof}

The following lemma give us some conditions under which the separation functor $(-)_{\Haus}$ preserves topological extensions.

\begin{lemma} \label{haus exactness}
	Let $\s{E}:0\to A\overset{\iota}{\to} G\overset{\pi}{\to} B\to 0$ be a topological extension of locally compact groups. Then, in each of the following cases:
	\begin{enumerate}[nosep, label=\alph*)]
		\item $A$ is Hausdorff compact or
		\item $B$ is Hausdorff and $A$ is second countable,
	\end{enumerate}
	the topological extension $\s{E}$ yields the following extension in $\TAb$
	\[ \s{E}_{\Haus}:0\to A_{\Haus} \overset{\iota_{\Haus}}{\to} G_{\Haus}\overset{\pi_{\Haus}}{\to} B_{\Haus}\to 0. \]
	In particular, in the both cases above, the sequence
	\[ \s{E}^*:0\to B^*\overset{\pi^*}{\to} G^*\overset{\iota^*}{\to} A^*\to 0 \]
	is as extension in $\LCA$.
\end{lemma}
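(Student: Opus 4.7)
The plan is to first verify that $\s{E}_{\Haus}$ is a topological extension in $\TAb$ (in fact, all three groups will lie in $\LCA$), and then to deduce the assertion about $\s{E}^*$ by applying Pontryagin duality (Proposition~\ref{Pontryagin duality theorem}) to $\s{E}_{\Haus}$, combined with the canonical identifications $G^* \cong (G_{\Haus})^*$ (and similarly for $A$ and $B$) provided by property~(iii) of the separation functor.

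Continuity of $\iota_{\Haus}$ and $\pi_{\Haus}$ and surjectivity of $\pi_{\Haus}$ are immediate from the functoriality of $(-)_{\Haus}$. Injectivity of $\iota_{\Haus}$ follows from the fact that $\iota$ is a topological embedding, which yields $\iota^{-1}(\overline{\{0_G\}}) = \overline{\{0_A\}}$. To see that $\pi_{\Haus}$ is a quotient map, observe that $\pi_{\Haus} \circ q_G = q_B \circ \pi$ is a composition of quotient maps, hence itself a quotient map; since $q_G$ is a continuous surjection, the standard argument then gives that $\pi_{\Haus}$ is a quotient map.

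The crucial step is exactness at $G_{\Haus}$, which amounts to the identity $\pi^{-1}(\overline{\{0_B\}}) = \iota(A) + \overline{\{0_G\}}$. Identifying $B$ with $G/\iota(A)$ and using the general formula $\overline{\{0_{G/H}\}} = \overline{H}/H$ for a quotient topological group, this reduces to showing $\overline{\iota(A)} = \iota(A) + \overline{\{0_G\}}$. In case~(b), Hausdorffness of $B$ forces $\iota(A) = \pi^{-1}(\{0_B\})$ to be closed in $G$; together with $\overline{\{0_G\}} \subseteq \iota(A)$ (since $\iota(A)$ is closed and contains $0$), both sides equal $\iota(A)$. In case~(a), the map $q_G \circ \iota \colon A \to G_{\Haus}$ is continuous and injective (by Hausdorffness of $A$, which makes $\overline{\{0_A\}}=\{0_A\}$), and its image is compact in the Hausdorff group $G_{\Haus}$, hence closed; pulling back under $q_G$ shows that $\iota(A) + \overline{\{0_G\}} = q_G^{-1}(q_G(\iota(A)))$ is closed in $G$ and contains $\iota(A)$, whence contains $\overline{\iota(A)}$, while the reverse inclusion is obvious.

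Finally, strictness of $\iota_{\Haus}$ is obtained by appealing to Example~\ref{strict}: in case~(a), $A_{\Haus}=A$ is Hausdorff compact and $G_{\Haus}$ is Hausdorff, so item~(c) applies; in case~(b), $A_{\Haus}$ is locally compact and second countable (as a quotient of $A$ by the closed subgroup $\overline{\{0_A\}}$), $G_{\Haus}$ is locally compact and Hausdorff (using that $G$ is locally compact by the extension property listed in the text), and $\im \iota_{\Haus}$ is closed in $G_{\Haus}$ by the analysis above, so item~(d) applies. The main obstacle in the whole argument is the identity $\overline{\iota(A)} = \iota(A) + \overline{\{0_G\}}$ in case~(a), since this is the only place where the hypothesis ``$A$ is Hausdorff compact'' is used in a substantive way, and it requires the trick of transferring the problem to the Hausdorff group $G_{\Haus}$ via $q_G \circ \iota$.
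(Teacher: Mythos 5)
Your proof is correct, but it takes a genuinely different route from the paper's. The paper fits $\s{E}$ and $\s{E}_{\Haus}$ (written with $\ker\pi_{\Haus}$ in the left-hand slot, so that the bottom row is exact by construction) into a commutative ladder and applies the Snake Lemma to get a four-term sequence $0\to\ker f\to\overline{\{0_G\}}\to\overline{\{0_B\}}\to\coker f\to 0$ for the induced map $f:A\to\ker\pi_{\Haus}$; it then kills $\ker f$ using the observation that $\overline{\{0_G\}}$ admits no nontrivial Hausdorff subgroups, and kills $\coker f$ (in case (a)) using strictness of $f$, closedness of $\im f$, and continuity of the connecting map, for which it must invoke Schochet's topological snake lemma. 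You instead reduce exactness at $G_{\Haus}$ to the single set-theoretic identity $\overline{\iota(A)}=\iota(A)+\overline{\{0_G\}}$ and prove it directly: trivially in case (b) where $\iota(A)$ is already closed, and in case (a) by pushing $\iota(A)$ forward to the Hausdorff group $G_{\Haus}$, where compactness makes its image closed, and pulling back. Your argument is more elementary and self-contained --- it avoids the topological snake lemma entirely and makes the geometric content of exactness explicit --- at the cost of having to verify the embedding/quotient properties of $\iota_{\Haus}$ and $\pi_{\Haus}$ by hand (which you do correctly via Examples \ref{strict}.c and \ref{strict}.d and the openness of quotient maps). The paper's formulation, by contrast, packages those verifications into the statement that $f$ is a topological isomorphism (case (a)) or a strict surjection with kernel $\overline{\{0_A\}}$ (case (b)). Both arguments use the hypotheses in the same essential places, and both conclude the dual statement identically from exactness of $(-)^*$ on $\LCA$ together with the isomorphism $q_G^*:(G_{\Haus})^*\to G^*$.
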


\begin{proof}
	We have the following commutative diagram of abelian topological groups whose rows are topological extensions
	\begin{equation*} 
		\begin{tikzcd}
			\s{E}: & 0 \ar{r} & A \ar{r}{\iota} \ar{d}{f} & G \ar{r}{\pi} \ar{d}{q_G} & B \ar{r} \ar{d}{q_B} & 0 \\
			\s{E}_{\Haus}: & 0 \ar{r} & \ker\pi_{\Haus} \ar{r} & G_{\Haus} \ar{r}{\pi_{\Haus}} & B_{\Haus} \ar{r} & 0.
		\end{tikzcd}	
	\end{equation*}
	By the Snake Lemma, we have the following exact sequence of abelian topological groups
	\begin{equation} \label{haus sequence}
		0 \to \ker f \overset{\alpha}{\to} \overline{\{0_G\}} \overset{\pi'}{\to} \overline{\{0_B\}} \overset{\beta}{\to} \coker f \to 0.
	\end{equation}
	where $\alpha$ and $\pi'$ are continuous since $\iota$ and $\pi$ are so, respectively. Moreover, by Lemma \ref{strictness and injectivity}, $\alpha$ is strict since $\iota$ is so. Furthermore, by \cite[Proposition 4]{topsnake}, we know that $\beta$ is also continuous (but possibly not strict). Then:
	\begin{enumerate}[nosep,label=\alph*)]
		\item When $A$ is Hausdorff and compact, we have to prove that $f$ is a topological isomorphism. Note that $\ker f$ is Hausdorff since it is a subspace of $A$. Moreover, since $\alpha$ is continuous and strict, $\ker f$ is topologically isomorphic to subgroup of $\overline{\{0_G\}}$. However, $\overline{\{0_G\}}$ does not contain Hausdorff subgroups since every open subset of $\overline{\{0_G\}}$ contains $0_G$. Indeed, given any point $x\in\overline{\{0_G\}}$, every open neighborhood $U$ of $x$ contains $0_G$ and, therefore, $x$ and $0_G$ cannot be separated in disjoint open subsets of $\overline{\{0_G\}}$. Hence, $\ker f$ is trivial. Now, since $f$ is continuous, $A$ is compact and $\ker\pi_{\Haus}$ is Hausdorff, $f$ is strict (see Example \ref{strict}.c). Moreover, $\im f$ is a closed subgroup of $\ker\pi_{\Haus}$ (see Example \ref{closed image}). Hence, $\coker f$ is Hausdorff (see \cite[p. 45, Corollary 4.7]{Stroppel}). Hence, since $\beta$ is continuous, $\beta^{-1}(\{0_{\coker f}\})$ is a closed subset of $\overline{\{0_B\}}$ which contains $0_B$ and, therefore,
		\[ \overline{\{0_B\}} = \beta^{-1}(\{0_{\coker f}\}). \]
		Thus $\beta$ is the zero map and, consequently, $\coker f$ is trivial. Consequently, $f$ is a bijective, continuous and strict homomorphism. Hence $f$ is a topological isomorphism.
		\item When $B$ is Hausdorff, we have to prove that $A_{\Haus}$ is topologically isomorphic to $\ker\pi_{\Haus}$. By the sequence \eqref{haus sequence}, we know that $\coker f$ is trivial and $\ker f$ is isomorphic to $\overline{\{0_G\}}$ as topological groups. Then $f$ is strict since it is a surjective continuous map from a second countable locally compact space to a Hausdorff locally compact space (see Example \ref{strict}.d)). Now, since $B$ is Hausdorff, $\iota(A)$ is a closed subgroup of $G$. Consequently, $\iota$ induces an isomorphism of topological groups between $\overline{\{0_A\}}$ and $\overline{\{0_G\}}$. Thus $f$ is a surjective, continuous and strict homomorphism whose kernel is isomorphic to $\overline{\{0_A\}}$. We conclude that $\ker\pi_{\Haus}$ is isomorphic to $A_{\Haus}$ as topological groups.
	\end{enumerate}  
	The last assertion follows from the exactness of the Pontryagin dual on $\LCA$ and the fact that $q_G^*:(G_{\Haus})^*\to G^*$ is an isomorphism (see the paragraph below Definition \ref{haus def}).
\end{proof}

\begin{remark}
	In the case $a)$ of previous lemma, we may remove the assumption that $G$ and $B$ are locally compact to obtain a topological extension
	\[ \s{E}_{\Haus}:0\to A_{\Haus} \overset{\iota_{\Haus}}{\to} G_{\Haus}\overset{\pi_{\Haus}}{\to} B_{\Haus}\to 0. \]
\end{remark}

\section{Topological extensions} 

Consider the following extension in $\TAb$
\begin{equation} \label{top extension}
	\s{E}: 0 \to A \overset{\iota}{\to} G \overset{\pi}{\to} B \to 0,
\end{equation}
that is, all maps are continuous and strict homomorphisms. The underlying extension in $\Ab$
\begin{equation*}
	\s{|E|}: 0 \to A \overset{\iota}{\to} G \overset{\pi}{\to} B \to 0
\end{equation*}
is completely characterized by a (possibly non-unique) \emph{factor set} $h_s:B\times B\to A$ (see \cite[Definition 6.6.4]{Weibel}) defined by choosing a set-theoretic section $s:B\to G$ of $\pi$ via
	\[ h_s(b,b') = s(b) + s(b') - s(b+b'). \]
More precisely, $h_s$ induces a group operation $+_s$ on $A\times B$ via
\begin{equation} \label{operation}
	(a,b) +_s (a',b') := (a+a'+h_s(b,b'),b+b').
\end{equation} 
Thus, $G$ is algebraically isomorphic to $(A\times B,+_s)$ via
\begin{align} \label{theta}
	\begin{split}
		\theta_s:(A\times B,+_s) &\to G \\
		(a,b) &\mapsto \iota(a)+s(b).
	\end{split}
\end{align}
On the other hand, there exists some natural topologies on $A\times B$ to consider. For example, the product topology $\tau_{A\times B}$ or the topology $\tau_{A,B}$ whose basis of neighborhoods of $(0_A,0_B)$ is the product of open neighborhoods $U\times V$ of $0_A$ and $0_B$, respectively (see \cite[p. 33, Theorem 3.22]{Stroppel}). When $h_s$ is continuous (resp. continuous at $(0_B,0_B)$), the triple $(A\times B,+_s,\tau_{A\times B})$ (resp. $(A\times B,+_s,\tau_{A,B})$) forms an abelian topological group and
\begin{equation*}
	\s{E}: 0 \to A \overset{i}{\to} (A\times B,+_s) \overset{p}{\to} B \to 0,
\end{equation*}
becomes a topological extension, where $i$ and $p$ are the canonical inclusion and projection respectively. However, without assuming any continuity conditions on $h_s$, neither $\tau_{A\times B}$ nor $\tau_{A,B}$ may be compatible with $+_s$. Note that a continuity condition on $s$ implies a certain continuity condition on $h_s$ but not conversely in general. Furthermore, even when $(A\times B,+_s,\tau_{A,B})$ is a topological group, it is not always true that $\theta_s$ is an topological isomorphism. Nevertheless, there are some cases where $h_s$ and $+_s$ are compatible and $\theta_s$ is an isomorphism. One of these results is due to Nagao, who gives a characterization of the topological extensions between first countable topological groups.

\begin{proposition}[Theorem 4 \cite{Nagao}] \label{nagao theorem}
	Suppose that $A$ and $B$ are first countable. For every topological extension $G$ as in \eqref{top extension}, there exists a set-theoretic section $s:B\to G$ of $\pi$, satisfying $s(0_B)=0_G$, such that it is continuous at $0_B$, $(A\times B,+_s,\tau_{A,B})$ is an abelian topological group and $\theta_s$ is a topological isomorphism. Furthermore, a sequence $(\iota(a_i)+s(b_i))_{i\in\N}$ converges to $0_G$ if, and only if, $(a_i)_{i\in\N}$ and $(b_i)_{i\in\N}$ converge to $0_A$ and $0_B$, respectively.
\end{proposition}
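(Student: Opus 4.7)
The plan is to build the section $s$ directly using first countability, and then derive the remaining assertions in sequence. Since first countability is an extension property, $G$ is itself first countable, so one may fix a decreasing countable basis $\{W_n\}_{n\geq 1}$ of symmetric open neighborhoods of $0_G$ (with the convention $W_0=G$). Because $\pi$ is strict and surjective, it is an open map, so $V_n:=\pi(W_n)$ is a decreasing countable basis of open neighborhoods of $0_B$. Define $s:B\to G$ by $s(0_B)=0_G$ and, for $b\neq 0_B$, by choosing $s(b)\in W_{n(b)}\cap\pi^{-1}(b)$ where $n(b):=\sup\{n\geq 0:b\in V_n\}$ (using openness of $\pi$ to lift points of $\bigcap V_n=\overline{\{0_B\}}$ into $\bigcap W_n=\overline{\{0_G\}}$ when $n(b)=\infty$). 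Continuity of $s$ at $0_B$ is then immediate from the construction: for any neighborhood $W$ of $0_G$, pick $n$ with $W_n\subseteq W$; by the definition of $n(b)$, $V_n\subseteq s^{-1}(W_n)\subseteq s^{-1}(W)$.

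From the formula $h_s(b,b')=s(b)+s(b')-s(b+b')$, continuity of $s$ at $0_B$ together with continuity of $+$ in $G$ shows that $h_s$ is continuous at $(0_B,0_B)$ as a map into $G$; since $h_s$ lands in $\iota(A)$ and $\iota$ is strict, so that $\iota^{-1}|_{\iota(A)}:\iota(A)\to A$ is continuous, $h_s:B\times B\to A$ is continuous at $(0_B,0_B)$. I would then invoke the standard criterion (here Stroppel, Theorem 3.22) which says that a filter basis at the identity defines a compatible topological group topology if and only if it is closed under intersection, admits halvings $V+_s V\subseteq U$, and admits inversions $-_s V\subseteq U$. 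Stability under intersection is automatic for a product filter, and the two remaining axioms reduce to the corresponding properties in $A$ and $B$ combined with the continuity of $h_s$ at the origin. This makes $(A\times B,+_s,\tau_{A,B})$ an abelian topological group.

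Finally, $\theta_s$ is an algebraic isomorphism by the factor-set formalism, so by translation it suffices to check that both $\theta_s$ and $\theta_s^{-1}$ are continuous at the identity: continuity of $\theta_s(a,b)=\iota(a)+s(b)$ at $(0,0)$ follows from continuity of $\iota$ and of $s$ at $0_B$, while for the inverse one writes $\theta_s^{-1}(g)=(\iota^{-1}(g-s(\pi(g))),\pi(g))$ and appeals to continuity of $\pi$, of $s$ at $0_B$, and of $\iota^{-1}|_{\iota(A)}$. The convergence characterization is then immediate from $\theta_s$ being a homeomorphism and the definition of $\tau_{A,B}$: the sequence $\iota(a_i)+s(b_i)=\theta_s(a_i,b_i)$ tends to $0_G$ if and only if $(a_i,b_i)\to(0_A,0_B)$ in $\tau_{A,B}$, i.e., $a_i\to 0_A$ and $b_i\to 0_B$. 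The main technical delicacy I expect is the construction of $s$ on $\overline{\{0_B\}}$, where one must ensure that preimages can be chosen inside $\overline{\{0_G\}}=\bigcap_n W_n$; this is precisely where first countability and openness (i.e., strictness) of $\pi$ must be leveraged together.
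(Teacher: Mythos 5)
The paper itself gives no proof of this statement --- it is quoted from Nagao's Theorem~4, which is proved there for metrizable (in particular Hausdorff) groups --- so your attempt has to be judged on its own. Everything downstream of the construction of $s$ is fine: the continuity of $h_s$ at the origin, the verification of the filter-basis axioms for $\tau_{A,B}$, the two-sided continuity of $\theta_s$ at the identity, and the convergence statement all follow correctly once $s$ is continuous at $0_B$. The gap is exactly at the point you flag as the ``main technical delicacy'', and it is not a delicacy but a genuine obstruction in the non-Hausdorff setting in which the paper states the result. Your construction requires, for $b\in\overline{\{0_B\}}=\bigcap_n V_n$, a lift $s(b)\in\pi^{-1}(b)\cap\overline{\{0_G\}}$; equivalently it requires $\pi(\overline{\{0_G\}})=\overline{\{0_B\}}$. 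This requirement is forced, not an artifact of your particular choice of $s$: every open neighborhood of $0_B$ contains $\overline{\{0_B\}}$, so any section continuous at $0_B$ must map $\overline{\{0_B\}}$ into $\bigcap_W W=\overline{\{0_G\}}$. But $\pi(\overline{\{0_G\}})$ can be a proper subgroup of $\overline{\{0_B\}}$: one has $\pi^{-1}(\overline{\{0_B\}})=\overline{\iota(A)}$, which may be strictly larger than $\iota(A)+\overline{\{0_G\}}$, and openness of $\pi$ does not help because the image of an intersection $\bigcap_n W_n$ need not contain $\bigcap_n\pi(W_n)$.

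Concretely, take $A=\mathbb{Q}$ with the subspace topology from $\mathbb{R}$, $G=\mathbb{R}$, and $B=\mathbb{R}/\mathbb{Q}$ with the (indiscrete) quotient topology. All three groups are first countable, $\iota$ is a topological embedding and $\pi$ is a quotient map, so this is a topological extension in the sense of Definition~\ref{strict sequence}. Here $\overline{\{0_G\}}=\{0\}$ while $\overline{\{0_B\}}=B$, so a section continuous at $0_B$ would have to be identically $0$, which contradicts $\pi\circ s=\id_B$. Hence no argument can close the gap in the stated generality: both your proof and the citation to Nagao are valid only under a Hausdorff hypothesis (under which first countability gives metrizability, $\overline{\{0_B\}}=\{0_B\}$, every $b\neq 0_B$ has $n(b)<\infty$, and your construction of $s$ goes through verbatim), or more generally under the added hypothesis $\overline{\iota(A)}=\iota(A)+\overline{\{0_G\}}$, which is precisely what your lifting step needs.
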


This results also says us that any choice of a set-theoretic section $s:B\to G$ of $\pi$, satisfying $s(0_B)=0_G$, whose induced cocycle $h_s$ is continuous at $(0_B,0_B)$, induces a topology on $G$ compatible with its operation (see \cite[Proposition 4.1]{Calabi}). Further, all of these topologies are \emph{a priori} different. Nagao gave a criterion to know when two such sections $s$ and $s'$ define the same topology.

\begin{proposition}[Theorem 5 \cite{Nagao}] \label{nagao comparison}
	Let $A$ and $B$ be first countable abelian topological groups and let $G$ be the algebraic extension
		\[ 0 \to A \to G \overset{\pi}{\to} B \to 0. \]
	Two set-theoretic sections $s_i:B\to G$ of $\pi$ ($i=1,2$), satisfying $s_i(0_B)=0_G$, such that $h_{s_i}$ are continuous at $(0_B,0_B)$, define the same topology on $G$ if and only if the map
	\begin{align*}
	f_{s_1,s_2}:B &\to A \\
	b &\mapsto \iota^{-1}(s_1(b)-s_2(b))
	\end{align*}
	is continuous at $0_B$.
\end{proposition}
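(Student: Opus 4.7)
The plan is to translate the equality of topologies into a statement about convergent sequences, via the explicit convergence criterion stated at the end of Proposition \ref{nagao theorem}. Write $\tau_i$ for the topology on $G$ transferred from $(A\times B, +_{s_i}, \tau_{A,B})$ through $\theta_{s_i}$; since $A$ and $B$ are first countable and this property is preserved under topological extensions, each $(G,\tau_i)$ is first countable. Hence $\tau_1=\tau_2$ is equivalent to the identity map $\id_G\colon (G,\tau_1)\to(G,\tau_2)$ being a bi-continuous group homomorphism, which reduces to sequential continuity at $0_G$ in both directions. Before starting the two implications I would record that $f:=f_{s_1,s_2}$ is well-defined (since $\pi(s_1(b)-s_2(b))=0$) and note the key identity $s_1(b)=s_2(b)+\iota(f(b))$ for all $b\in B$; also $f_{s_2,s_1}=-f$, so continuity of $f$ at $0_B$ is symmetric in the two sections.

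For the implication $(\Leftarrow)$, I would take $g_n\to 0$ in $\tau_1$, write $g_n=\iota(a_n)+s_1(b_n)$ (so that $a_n\to 0_A$ and $b_n\to 0_B$ by Proposition \ref{nagao theorem}), and use the key identity to rewrite $g_n=\iota(a_n+f(b_n))+s_2(b_n)$. Continuity of $f$ at $0_B$ forces $f(b_n)\to 0_A$, hence $a_n+f(b_n)\to 0_A$, and the convergence criterion applied in $\tau_2$ gives $g_n\to 0$ in $\tau_2$. The symmetric argument yields the reverse inclusion, so $\tau_1=\tau_2$. For $(\Rightarrow)$, assuming $\tau_1=\tau_2$ and taking $b_n\to 0$ in $B$, the convergence criterion applied in $\tau_1$ gives $s_1(b_n)=\iota(0)+s_1(b_n)\to 0$ in $\tau_1$, hence also in $\tau_2$; writing $s_1(b_n)=\iota(f(b_n))+s_2(b_n)$ and invoking the criterion in $\tau_2$ forces $f(b_n)\to 0_A$. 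First countability of $B$ then promotes this sequential continuity at $0_B$ to ordinary continuity at $0_B$.

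The main obstacle is mostly bookkeeping: I need to justify that comparing the topologies $\tau_1$ and $\tau_2$ can be reduced to comparing their convergent sequences at $0_G$. This rests on the first countability of $(G,\tau_i)$ and on the fact that a group homomorphism between first countable topological groups is continuous if and only if it is sequentially continuous at the identity. Once this reduction is in place, the entire proof collapses to the algebraic manipulation $s_1=s_2+\iota\circ f$ fed through the convergence criterion provided by Proposition \ref{nagao theorem}.
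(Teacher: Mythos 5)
The paper does not prove this proposition: it is imported as Theorem 5 of \cite{Nagao}, so there is no internal proof to compare against. Your argument is correct and self-contained. The reduction of $\tau_1=\tau_2$ to two-sided sequential continuity of $\id_G$ at $0_G$ is legitimate because first countability is an extension property (so each $(G,\tau_i)$ is first countable) and a homomorphism of topological groups is continuous as soon as it is continuous at the identity; the identity $s_1=s_2+\iota\circ f$ together with the convergence criterion then does the rest, and the symmetry $f_{s_2,s_1}=-f$ handles the reverse inclusion. The only point worth making explicit is that Proposition \ref{nagao theorem} states the convergence criterion only for the particular section it constructs on a given topological extension, whereas you invoke it for each $\tau_i$ with its own arbitrary topologizing section $s_i$. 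This is harmless: by Definition \ref{nagao top} the map $\theta_{s_i}$ is by construction a homeomorphism onto $(G,\tau_i)$, so the sets $\iota(U)+s_i(V)$, with $U$ and $V$ neighborhoods of $0_A$ and $0_B$, form a neighborhood basis of $0_G$ for $\tau_i$; since the decomposition $g=\iota(a)+s_i(b)$ with $b=\pi(g)$ is unique, the criterion ``$g_n\to 0_G$ iff $a_n\to 0_A$ and $b_n\to 0_B$'' follows directly for each $i$. With that observation added, both implications go through exactly as you describe.
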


Another result is due to Mackey, which was improved by Moore, and works under the assumption of second countability on the groups involved. Recall that a \emph{polish group} is a separated topological group that admits a complete metric (see \cite[Proposition 1]{Moore}). For example, every second countable Hausdorff locally compact topological group is polish.
	
\begin{proposition}[Theorem 10 \cite{Moore}]
	 Suppose that $B$ is Hausdorff, locally compact and second countable and $A$ is a polish group. For every topological extension $G$ as in \eqref{top extension}, there exists a set-theoretic section $s:B\to G$ of $\pi$, satisfying $s(0_B)=0_G$, such that $s$ is borelian, $(A\times B,+_s,\tau_{A,B})$ is an abelian topological group and $\theta_s$ is a topological isomorphism. Furthermore, for every algebraic extension 
	 \begin{equation} \label{alg extension}
	 	0 \to A \to G \overset{\pi}{\to} B \to 0
	 \end{equation}
	 and a set-theoretic section $s:B\to G$ of $\pi$, satisfying $s(0_B)=0_G$, such that $h_s$ is borelian, there exists a unique topology on $G$ such that \eqref{alg extension} becomes a topological extension.
\end{proposition}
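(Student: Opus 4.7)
The plan is to combine Mackey's borel cross-section theorem with the automatic-continuity properties of polish groups.

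First, I would observe that all the topological groups involved are polish: $B$ is polish because every Hausdorff, locally compact, second countable group is polish; $A$ is polish by hypothesis; and polishness is preserved by topological extensions (being derived from separability, metrizability and completeness, each of which is itself an extension property). Hence $G$ is polish as well, and in particular standard borel.

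For the existence part, I would apply Mackey's borel cross-section theorem to produce a borel section $s:B\to G$ of $\pi$ which, after translating, satisfies $s(0_B)=0_G$. The induced factor set $h_s$ is then borel, and $\theta_s:A\times B\to G$ is an algebraic isomorphism which is borel with borel inverse, so it transports the polish topology of $G$ to a polish group topology $\tau'$ on $(A\times B,+_s)$. The main point is to identify $\tau'$ with $\tau_{A,B}$: since the inclusion $a\mapsto(a,0_B)$ and the projection $(a,b)\mapsto b$ correspond under $\theta_s$ to the continuous and strict maps $\iota$ and $\pi$, every product neighborhood $U\times V$ of $(0_A,0_B)$ is a $\tau'$-neighborhood of $(0_A,0_B)$; conversely, the identity map $(A\times B,\tau_{A,B})\to(A\times B,\tau')$ is borel and hence continuous by Pettis' theorem, so the two topologies agree at the identity and thus globally.

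For the uniqueness part, given an algebraic extension with a borel section $s$ whose cocycle $h_s$ is borel, I would endow $A\times B$ with the topology $\tau_{A,B}$ and verify that $+_s$ is continuous at the origin by a Steinhaus-type argument: for every neighborhood of $0_A$, the borel preimage under $h_s$ has non-meager interior near $(0_B,0_B)$, and a standard application of Baire category then forces the desired continuity. Uniqueness of this topology follows because any two polish group topologies on the same underlying group which induce the same borel structure must coincide, again via Pettis' theorem applied to the identity map. The main obstacle is the rigorous passage from borel measurability of $h_s$ to its continuity at $(0_B,0_B)$, possibly up to modifying $s$ by a borel coboundary; this delicate descriptive-set-theoretic step is where the core content of the theorem lies and where Moore's original argument invests most of its effort.
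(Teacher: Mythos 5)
The paper does not prove this statement at all: it is quoted, with attribution, as Theorem~10 of Moore's \emph{Group extensions and cohomology of locally compact groups III}, and the author relies on Moore's proof rather than reproducing it. So there is no in-paper argument to compare against; what can be assessed is whether your sketch would stand on its own. It is in the right circle of ideas (Mackey's Borel cross-section theorem, Pettis/Baire-category automatic continuity, uniqueness of Polish group topologies on a standard Borel group), which is indeed the descriptive-set-theoretic machinery Moore uses. But as a proof it has two concrete gaps.

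First, the identification of $\tau'$ with $\tau_{A,B}$ does not go through as written. The claim that every product neighborhood $U\times V$ of $(0_A,0_B)$ is a $\tau'$-neighborhood does not follow from the continuity and strictness of $\iota$ and $\pi$ alone: unwinding it, you need $\iota(U)+s(V)$ to be a neighborhood of $0_G$ in $G$, i.e.\ you need some open $W\ni 0_G$ on which $g\mapsto g-s(\pi(g))$ lands in $\iota(U)$ --- a continuity-type condition on the Borel section $s$ that is precisely the hard part, not a formal consequence of strictness. The converse direction via Pettis' theorem is circular: to apply Pettis to the identity map $(A\times B,\tau_{A,B})\to(A\times B,\tau')$ you must already know that $(A\times B,+_s,\tau_{A,B})$ is a Baire topological group, but the compatibility of $\tau_{A,B}$ with $+_s$ (equivalently, continuity of $h_s$ at $(0_B,0_B)$, possibly after correcting $s$ by a coboundary) is exactly one of the assertions being proved. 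Second, you explicitly defer this central measurability-to-continuity step to ``Moore's original argument,'' and the uniqueness part likewise leans on the unproved assertion that \emph{any} topology making \eqref{alg extension} a topological extension is Polish with the same Borel structure (this is where the Mackey--Weil uniqueness theorem enters). So the skeleton is right, but the load-bearing steps are either circular or left to the cited source; for the purposes of this paper the honest course is to do what the author does and cite Moore.
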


\begin{definition} \label{nagao top}
	If $(A\times B,+_s,\tau_{A,B})$ is a topological group, we will denote $\tau_{\s{E},s}$ the topology on $G$ induced by $\tau_{A,B}$ via $\theta_s$. We say that $s$ is a \emph{topologizing section}.
\end{definition}

\begin{remark}
	It seems that is not always true that for a abelian algebraic extension of two abelian topological groups, there exists a topologizing section. Observe that this problem differs from the problem of existence of topological extensions between two given abelian topological groups. The first problem asks whether a fixed algebraic extension can be topologized.
\end{remark}

It follows from the definition that, if $s$ is a topologizing section of $\pi$, then the following commutative diagram of abelian groups 
\begin{equation} \label{nagao diagram}
	\begin{tikzcd}
		0 \ar{r} & A \ar{r}{i} \ar[d,equal] & (A\times B,+_s) \ar{r}{p} \ar{d}[swap]{\theta_s} & B \ar{r} \ar[d,equal] & 0 \\
		0 \ar{r} & A \ar{r}{\iota} & G \ar{r}{\pi} & B \ar{r} \arrow[l, bend right=40, "s" swap] & 0
	\end{tikzcd}
\end{equation}
has strict exact rows and $\theta_s$ is a topological isomorphism and the section $s:B\to G$ is continuous at $0_B$.

\begin{remark}
	In general, the topology $\tau_{A,B}$ on $A\times B$ is coarser than the product topology $\tau_{A\times B}$, which implies that the product map $(f,g):X\to(A\times B,\tau_{A,B})$ is continuous for every topological space $X$ and every pair of continuous maps $f:X\to A$ and $g:X\to B$. To show that $\tau_{A,B}$ is strictly coarser than $\tau_{A\times B}$, we recall that a basis $\s{B}$ for the topology $\tau_{A,B}$ is the set of all translations of all products of open neighborhoods of $0$ (see \cite[p. 33, Theorem 3.22]{Stroppel}). Thus, due to the twisted group operation $+_s$, it can happen that a product $U\times V$ of open subsets $U\subseteq A$ and $V\subseteq B$ cannot be covered by open subsets in $\s{B}$. For example, assume that $A$ has discrete topology and $V\subseteq B$ is open and does not contain $0_B$. In order to have $\{a\}\times V$ in $\tau_{A,B}$ it is necessary that, for every $b\in V$, there exists an open neighborhood $W$ of $0_B$ such that 
	\[ (a,b)\in (a',b')+(\{0_A\}\times W) \subseteq \{a\}\times V, \]
	for some $(a',b')\in A\times B$. So if $W\neq\{0_B\}$ (i.e., $B$ does not have discrete topology), then we see that
	\[ (a'+h_s(b',w),b'+w) \in (a',b')+(\{0_A\}\times W), \]
	for some $w\in W$ different from $0_B$. However $a'+h_s(b',w)$ can differ from $a$ and therefore $\{a\}\times V$ does not belong to $\tau$. 
\end{remark}

\begin{remark}
	In the proof of \cite[Theorem 2]{Nagao}, the author states that one can choose a section $s:B\to G$ satisfying that $s(-b)=-s(b)$ for all $b\in B$. This assertion is problematic if there exist elements of order 2 in $B$ since \emph{a priori} the element $s(b)$ does not have order 2. Nevertheless, this assumption is not necessary for proving the results in \cite{Nagao}. The important fact is that one should be able to find symmetric neighborhoods of the indentity element, which is always possible.
\end{remark}

\begin{remarks} \label{open nagao}
For $G$ equipped with the topology $\tau_{\s{E},s}$ as in Definition \ref{nagao top}, then a subset $U\subseteq G$ is an open neighborhood of $0_G$ if, and only if, $\iota^{-1}(U)\subseteq A$ and $\pi(U)\subseteq B$ are open neighborhoods of $0_A$ and $0_B$, respectively.
\end{remarks}

\begin{remark} \label{choice discrete}
	In relation to Proposition \ref{nagao comparison}, we can add the following.
	\begin{enumerate}[nosep, label=\alph*)]
		\item The topologies $\tau_{\s{E},s}$ and $\tau_{\s{E},s'}$ will be equivalent whenever $B$ has the discrete topology. Consequently, in this case there exists a unique topology on $G$ such that 
		\[ 0 \to A \to G \to B \to 0. \]
		is a topological extension, or equivalently, there exists a unique topology on $G$ such that $A$ is an open subgroup of $G$ (cf. \cite[beginning of \S 3]{DH19}).
		\item Assume that $A$ and $B$ are objects in $\LCA$. If there exist two topologizing sections $s_1$ and $s_2$ of $\pi$ that define different topologies on $G$, then the group $\Ext(B,A)$ of topological extensions is non-trivial (for a detailed treatment of this group, see \cite{FulpGrif}). Indeed, if the topological extensions
		\begin{align*}
			& 0 \to A \to (G,\tau_{\s{E},s_1}) \to B \to 0 \\
			& 0 \to A \to (G,\tau_{\s{E},s_2}) \to B \to 0
		\end{align*}
		are equivalent, then there exists a continuous map $\phi:(G,\tau_{\s{E},s_1})\to(G,\tau_{\s{E},s_2})$ that extends the indentity maps of $A$ and $B$. Thus $\phi$ must be a topological isomorphism (see Corollary 2.2 in \emph{loc. cit.}). But $\tau_{\s{E},s_1}$ and $\tau_{\s{E},s_2}$ differ. Hence the topological extensions defined by $s_1$ and $s_2$ define two different elements in $\Ext(B,A)$. As a consequence of the above, we can ensure that the topology on $G$ is independent of the choice of the section when $A$ is divisible and $B$ is torsion (see \cite[Theorem 1]{Fulp}).
	\end{enumerate}
\end{remark}

As a consequence of Proposition \ref{nagao theorem} the following result follows.

\begin{proposition}
	The property $\s{P}:=\text{``all subgroups of finite index are open''}$ is an extension property for first countable abelian topological groups.
\end{proposition}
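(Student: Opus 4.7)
Let $H \le G$ be a subgroup of finite index, and set $K := H \cap A$ and $L := \pi(H)$. These have finite index in $A$ and $B$, respectively, hence are open by the hypothesis $\s{P}$ on $A$ and $B$. Consequently $\pi^{-1}(L) = H + \iota(A)$ is an open subgroup of $G$ containing $H$. Replacing $G$ by this open subgroup and $B$ by $L$ (which is still first countable and satisfies $\s{P}$, since any finite-index subgroup of $L$ is also finite-index in $B$ hence open in $B$ and a fortiori in $L$), I may assume $\pi(H) = B$. Then $G = H + \iota(A)$, $[G:H] = [A:K] =: N$, and the recipe $h + \iota(a) \mapsto a + K$ defines a surjective group homomorphism $\Phi : G \to A/K$ with $\ker \Phi = H$.

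Next, apply Proposition \ref{nagao theorem} to pick a topologizing section $s : B \to G$ of $\pi$, continuous at $0_B$, so that $\theta_s$ realizes the topology of $G$ as $\tau_{A,B}$ on $(A \times B, +_s)$. A basic open neighborhood of $0_G$ has the form $\iota(U) + s(V)$, with $U$ and $V$ open neighborhoods of $0_A$ and $0_B$ respectively. Since $\iota(K) \subseteq H$, it suffices to exhibit an open neighborhood $V$ of $0_B$ with $s(V) \subseteq H$, and then $\iota(K) + s(V) \subseteq H$ is an open neighborhood of $0_G$. Setting $\bar{a}_0 := \Phi \circ s : B \to A/K$, the condition $s(V) \subseteq H$ is equivalent to $\bar{a}_0(V) = \{0\}$.

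A direct calculation using $s(b) + s(b') = s(b+b') + \iota(h_s(b,b'))$ and the fact that $\Phi$ is a homomorphism yields the identity
\[
\bar{a}_0(b) + \bar{a}_0(b') - \bar{a}_0(b+b') = \overline{h_s(b,b')},
\]
where $\overline{h_s}$ is the composition of $h_s$ with the projection $A \twoheadrightarrow A/K$. Since $h_s$ is continuous at $(0_B, 0_B)$ and $K$ is an open neighborhood of $0_A$, I can pick an open symmetric neighborhood $V_0$ of $0_B$ with $h_s(V_0 \times V_0) \subseteq K$; then $\bar{a}_0(b+b') = \bar{a}_0(b) + \bar{a}_0(b')$ for all $b, b' \in V_0$, i.e., $\bar{a}_0$ is additive on $V_0$. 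Introducing the open neighborhood $V_N := V_0 \cap \bigcap_{k=1}^{N} (k \cdot)^{-1}(V_0)$ of $0_B$ (an intersection of finitely many open sets, thanks to the continuity of multiplication by each integer), an induction on $k$ gives $\bar{a}_0(kb) = k\, \bar{a}_0(b)$ for $1 \le k \le N$ and every $b \in V_N$; in particular $\bar{a}_0(Nb) = N\bar{a}_0(b) = 0$ in $A/K$, since $A/K$ has exponent dividing $N$.

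The main obstacle will be the final step: leveraging this partial additivity and the vanishing $\bar{a}_0(Nb) = 0$ on $V_N$ to conclude that $\bar{a}_0^{-1}(0)$ is itself an open neighborhood of $0_B$. My plan here is to use first countability and property $\s{P}$ on $B$ as follows. The partial additivity of $\bar{a}_0$ on $V_0$ forces $\bar{a}_0$ to restrict to a bona fide group homomorphism on a suitable open subgroup of $B$ contained in $V_0$ (or sitting inside the open subgroup generated by $V_N$) into the finite group $A/K$. This homomorphism has kernel of finite index, which is open in $B$ by $\s{P}$ --- and this open kernel is the desired neighborhood $V$. Once $V$ is produced, $\iota(K) + s(V) \subseteq H$ is an open neighborhood of $0_G$ contained in $H$, so $H$ is open in $G$, completing the proof.
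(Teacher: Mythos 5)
Your reduction is correct and is in fact more careful than the paper's own argument: you correctly identify that, after arranging $\pi(H)=B$, the whole problem is to produce an open $V\ni 0_B$ with $s(V)\subseteq H$, i.e.\ with $\bar a_0|_V=0$. (The paper disposes of exactly this point by citing Remark~\ref{open nagao}, whose ``if'' direction asserts without justification that openness of $\iota^{-1}(H)$ and $\pi(H)$ suffices; your computation shows what actually has to be proved.) The partial additivity of $\bar a_0$ on $V_0$ and the vanishing $\bar a_0(Nb)=0$ for $b\in V_N$ are also correct. The genuine gap is the final paragraph. Neither mechanism you propose is available in general: a neighborhood $V_0$ of $0_B$ need not contain \emph{any} open subgroup of $B$ (already for $B=\R$), and a map that is additive on $V_0\times V_0$ need not extend to a homomorphism on the subgroup generated by $V_0$ (this is the classical obstruction to globalizing local homomorphisms). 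Moreover, even where such an extension to $\langle V_N\rangle$ exists, its kernel has finite index in $\langle V_N\rangle$ but typically infinite index in $B$, so property $\s{P}$ of $B$ says nothing about it.

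The gap does not appear to be repairable, because the statement seems to fail without extra hypotheses; the obstruction lives exactly in the map $\bar a_0$ you isolate. Fix a prime $p$ and an irrational $\alpha$, and let $B=\Z$ carry the group topology induced by $k\mapsto(k,\,k\alpha\bmod\Z)\in\hat{\Z}\times\mathbb{T}$, with neighborhood base at $0$ the sets $V_{m,\varepsilon}=\{k\in m\Z:\|k\alpha\|<\varepsilon\}$ ($\|\cdot\|$ the distance to the nearest integer). This $B$ is first countable, Hausdorff, non-discrete, and satisfies $\s{P}$, since every finite-index subgroup $n\Z$ equals the basic open set $V_{n,1}$. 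Let $g:\Z\to\Z/p$ send $k$ to the nearest integer to $k\alpha$ reduced mod $p$. A short computation with signed fractional parts gives $g(k+k')=g(k)+g(k')$ whenever $\|k\alpha\|,\|k'\alpha\|<1/4$, so the coboundary $h:=\delta g$ is a factor set vanishing near $(0,0)$ and $G:=(\Z/p\times\Z,+_h)$, topologized as in the paper, is a topological extension of $B$ by the discrete group $A:=\Z/p$. Since $h$ is a coboundary, $t(k):=(-g(k),k)$ is a group-theoretic splitting; $H:=t(\Z)$ has index $p$, $H\cap A=\{0\}$ and $\pi(H)=B$ are open, yet $H$ is open iff $g$ vanishes on some $V_{m,\varepsilon}$ --- which fails, because the density of $(km\alpha/p\bmod 1)_k$ in $[0,1)$ produces, in every $V_{m,\varepsilon}$, an element $j$ whose nearest integer to $j\alpha$ is $\equiv 1\bmod p$. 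So $G$ has a non-open finite-index subgroup. Your strategy does close, for instance, when $0_B$ admits a neighborhood base of open \emph{finite-index} subgroups (then $V_0$ may be taken to be such a subgroup, $\bar a_0|_{V_0}$ is an honest homomorphism into $A/K$, and its kernel is finite-index in $B$, hence open by $\s{P}$); some hypothesis of this kind seems necessary for the statement itself.
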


\begin{proof}
	Let $\s{E}:0\to A\to G\overset{\pi}{\to} B\to 0$ be a topological extension of first countable abelian topological groups. Assume that $\s{P}$ holds for $A$ and $B$. By Proposition \ref{nagao theorem}, there exists a topologizing section $s$ of $\pi$ that is continuous at $0_B$ and such that $\tau_{\s{E},s}$ agrees with $\tau_G$. Let $U$ be a subgroup of $G$ of finite index. Then $U\cap A$ and $\pi(U)$ are subgroups of $A$ and $B$, respectively, both of finite index and therefore open subgroups. We conclude that $U$ is open in $G$ (see Remark \ref{open nagao}).
\end{proof}

\begin{example}
	Let $G$ be a semiabelian variety defined over a $p$-adic field $k$, i.e., $G$ is an extension of an abelian variety $A$ by a torus $T$. We have a strict exact sequence of first countable abelian topological groups
	\[ 0\to T(k) \to G(k) \to A(k) \to H^1(k,T) \]
(see \cite[Proposition 4.2]{C15}). Moreover, $T(k)$ and $A(k)$ have property $\s{P}$ (see \cite[Chapter I, p. 26]{ADT} and \cite[Chapter I, p. 41, Lemma 3.3]{ADT}, respectively). Consequently, $G(k)$ has property $\s{P}$ since $H^1(k,T)$ is finite. More generally, let $M:=[Y\to G]$ be a Deligne 1-motive defined over $k$, i.e., $M$ is a two-term complex of $fppf$ sheaves concentrated in degrees $-1$ and $0$ with $Y$ equal to the Cartier dual of a torus and $G$ a semiabelian variety. One can similarly prove that the $0$-th hypercohomology group $H^0(k,M)$ has property $\s{P}$ (cf. \cite[Remark 2.4]{HS05}). 
\end{example}

\subsection*{Continuity of homomorphisms}

Treating with topologies that arises from topologizing section is that allows us to conclude the continuity of some homomorphisms without checking explicitly this condition. The following result illustrates this phenomenon.

\begin{proposition}[Lemma 2.1 \cite{GA24}] \label{P func}
	Let 
	\begin{equation*}
	\begin{tikzcd}
		\s{E}_1: 0 \ar{r} & A_1 \ar{r}{\iota_1} \ar{d}{\alpha} & G_1 \ar{r}{\pi_1} \ar{d}{\gamma} & B_1 \ar{r} \ar{d}{\beta} & 0 \\
		\s{E}_2: 0 \ar{r} & A_2 \ar{r}{\iota_2} & G_2 \ar{r}{\pi_2} & B_2 \ar{r} & 0
	\end{tikzcd}
	\end{equation*}
	be a commutative diagram of abelian groups with exact rows, where $A_i$ and $B_i$ are first countable abelian topological groups $(i=1,2)$. Let $s_i$ be topologizing sections of $\pi_i$. If we endow $G_i$ with the topology $\tau_{\s{E}_i,s_i}$ and the commutativity $\gamma\circ s_1=s_2\circ\beta$ holds, then $\gamma$ is continuous (resp. continuous and strict) if and only if $\alpha$ and $\beta$ are continuous (resp. continuous and strict).
\end{proposition}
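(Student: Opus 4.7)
My approach is to transport the problem along the topological isomorphisms $\theta_{s_i}:(A_i\times B_i,+_{s_i},\tau_{A_i,B_i})\to G_i$ provided by Proposition \ref{nagao theorem}. Combining the hypothesis $\gamma\circ s_1=s_2\circ\beta$ with the relation $\gamma\circ\iota_1=\iota_2\circ\alpha$ coming from the commutative diagram, one computes
\[ \gamma\bigl(\iota_1(a)+s_1(b)\bigr)=\iota_2(\alpha(a))+s_2(\beta(b)) \qquad \text{for all } (a,b)\in A_1\times B_1, \]
so that $\theta_{s_2}^{-1}\circ\gamma\circ\theta_{s_1}$ is the coordinate-wise map $\Phi(a,b):=(\alpha(a),\beta(b))$. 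Since each $\theta_{s_i}$ is a topological isomorphism, continuity (resp.\ continuity and strictness) of $\gamma$ is equivalent to the same property of $\Phi$, reducing everything to the study of this map on the $\tau_{A_i,B_i}$-topologies.

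Continuity follows in both directions from the fact that a basis of neighborhoods of $(0,0)$ in $\tau_{A_i,B_i}$ is given by products $U\times V$ of open neighborhoods of $0$: the forward direction is immediate, while the converse uses the identities $\iota_2\circ\alpha=\gamma\circ\iota_1$ and $\beta\circ\pi_1=\pi_2\circ\gamma$ together with the fact that each $\iota_i$ is a topological embedding and each $\pi_i$ is a quotient map. For the forward strictness direction, a basic open set $U_1\times V_1$ around $(0,0)$ maps under $\Phi$ to $\alpha(U_1)\times\beta(V_1)=(U_2\times V_2)\cap\im\Phi$ for some open $U_2\subseteq A_2$ and $V_2\subseteq B_2$ witnessing the strictness of $\alpha$ and $\beta$; general open sets are handled by translation.

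For the reverse strictness direction, I would first verify, using the continuity of $h_{s_1}$ (which holds because $+_{s_1}$ is a group topology), that products $U\times V$ of arbitrary open sets $U\subseteq A_1$ and $V\subseteq B_1$ are open in $\tau_{A_1,B_1}$. Assuming $\Phi$ is strict, take $U\subseteq A_1$ open and $V$ any open neighborhood of $0$ in $B_1$; the image $\Phi(U\times V)=\alpha(U)\times\beta(V)$ agrees with $W\cap(\alpha(A_1)\times\beta(B_1))$ for some $W$ open in $\tau_{A_2,B_2}$. Intersecting with the subgroup $A_2\times\{0\}$, whose subspace topology is identified with that of $A_2$, produces $\alpha(U)=W^\flat\cap\alpha(A_1)$ for a suitable open $W^\flat\subseteq A_2$, showing that $\alpha$ is strict.

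I expect the main obstacle to be the analogous argument for $\beta$. Since $\{0\}$ is not open in $A_2$, the symmetric intersection trick is unavailable and one must absorb the twist $h_{s_2}$ explicitly. Concretely, for $V\subseteq B_1$ open containing $0$, strictness of $\Phi$ applied to $A_1\times V$ gives $\alpha(A_1)\times\beta(V)=W\cap(\alpha(A_1)\times\beta(B_1))$ with $W$ open in $\tau_{A_2,B_2}$. For each $y\in\beta(V)$, the point $(0,y)\in W$ admits a basic neighborhood $(0,y)+_{s_2}(U''_y\times V''_y)\subseteq W$, and by continuity of $h_{s_2}$ at $(y,0)$ (where it vanishes) we can shrink $V''_y$ to $V'_y$ so that $-h_{s_2}(y,V'_y)\subseteq U''_y$. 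A direct calculation then yields $(0,y+v)\in W$ for all $v\in V'_y$, and the open set $\tilde V:=\bigcup_{y\in\beta(V)}(y+V'_y)\subseteq B_2$ satisfies $\tilde V\cap\beta(B_1)=\beta(V)$, whence $\beta$ is strict.
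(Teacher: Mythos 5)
Your reduction of $\gamma$ to the coordinatewise map $\Phi(a,b)=(\alpha(a),\beta(b))$ via the isomorphisms $\theta_{s_i}$ is correct, and it is exactly the specialization to $\sigma_{s_1,s_2}=0$ of the decomposition the paper uses for its strengthenings (Propositions \ref{P3 generalized} and \ref{open fibers}); note the paper does not prove Proposition \ref{P func} itself but cites \cite{GA24}. Both continuity directions and the forward strictness direction of your argument are fine. The reverse strictness direction, however, rests on two false claims. First, products $U\times V$ of arbitrary open sets are \emph{not} open in $\tau_{A_1,B_1}$: the paper's remark comparing $\tau_{A,B}$ with the product topology (just after diagram \eqref{nagao diagram}) exhibits a non-open product $\{a\}\times V$ with $A$ discrete. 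Second, a topologizing section only forces $h_{s}$ to be continuous at $(0,0)$; continuity of $h_{s_2}(y,\cdot)$ at $0$ for $y\neq 0$ is equivalent to continuity of $s_2$ at $y$, which you do not have. For instance, for $0\to\Z\to\R\to\mathbb{T}\to 0$ with $s(t)$ the representative of $t$ in $(-\tfrac12,\tfrac12]$ (a topologizing section for the usual topology on $\R$), one computes $h_s(\tfrac12,v)=1$ for small $v>0$ and $h_s(\tfrac12,v)=0$ for small $v\le 0$, so there is no neighborhood $V'$ of $0$ with $-h_s(\tfrac12,V')\subseteq\{0\}$. Hence the step ``shrink $V''_y$ so that $-h_{s_2}(y,V'_y)\subseteq U''_y$'' can fail, and your treatment of $\beta$ collapses; the treatment of $\alpha$ also invokes the false product-openness claim when $U$ is an arbitrary open set.

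Both halves are repairable within your framework. For $\alpha$, it suffices to take $U\ni 0_{A_1}$ (a homomorphism is strict once images of identity neighborhoods are identity neighborhoods of the image), so $U\times V$ is a basic open set and your intersection with $A_2\times\{0\}$ goes through. For $\beta$, do not try to reach $(0,y+v)$: instead use the point $(0,y)+_{s_2}(0,v)=(h_{s_2}(y,v),\,y+v)$, which lies in $W$ for every $v\in V''_y$ with no shrinking. The hypothesis $\gamma\circ s_1=s_2\circ\beta$ gives the identity $h_{s_2}(\beta(b),\beta(b'))=\alpha(h_{s_1}(b,b'))$, so whenever $y\in\beta(V)$ and $y+v\in\beta(B_1)$ one has $v\in\beta(B_1)$ and $h_{s_2}(y,v)\in\alpha(A_1)$; thus the point lies in $W\cap\operatorname{Im}\Phi=\alpha(A_1)\times\beta(V)$ and $y+v\in\beta(V)$, which yields $\tilde V\cap\beta(B_1)=\beta(V)$ as you wanted. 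Alternatively, the whole converse can be obtained without coordinates, as in the paper's proof of Proposition \ref{open fibers}: $\alpha$ is recovered from $\gamma$ through the embeddings $\iota_i$, and $\beta$ through the quotient maps $\pi_i$ using that $\pi_1$ is surjective.
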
 

Hereafter, we will study the problem, and its consequences, of obtaining the continuity of homomorphisms from commutative diagrams as above. For convenience, we introduce the following definition. Recall that a map between topological spaces $f:X\to Y$ is continuous at $x\in X$ if and only if for every open neighborhood $V$ of $f(x)$ in $Y$, the set $f^{-1}(V)$ contains an open neighborhood of $x$ in $X$. 

\begin{definition} \label{compatible}
	Let
	\begin{equation*}
		\begin{tikzcd}
			\s{E}_1: 0 \ar{r} & A_1 \ar{r}{\iota_1} \ar{d}{\alpha} & G_1 \ar{r}{\pi_1} \ar{d}{\gamma} & B_1 \ar{r} \ar{d}{\beta} & 0 \\
			\s{E}_2: 0 \ar{r} & A_2 \ar{r}{\iota_2} & G_2 \ar{r}{\pi_2} & B_2 \ar{r} & 0
		\end{tikzcd}
	\end{equation*}
	be a commutative diagram of abelian groups, where $A_i$ and $B_i$ are first countable abelian topological groups ($i=1,2$). Let $s_i$ be a section of $\pi_i$ satisfying $s_i(0)=0$ and define the map
	\begin{align} 
	\begin{split} \label{sigma}
		\sigma_{s_1,s_2}: B_1 &\to A_2 \\
		b &\mapsto \iota_2^{-1}(\gamma\circ s_1(b)-s_2\circ\beta(b)).
	\end{split}
	\end{align}
	We say that $s_1$ and $s_2$ are \emph{compatible} if $\sigma_{s_1,s_2}$ is continuous at $0_{B_1}$.
\end{definition}

While, in general, the condition for two sections to be compatible can be difficult to check, there are some cases in which we can verify it easily.

\begin{examples} \label{exam comp}
	Consider the situation of Definition \ref{compatible}.
	\begin{enumerate}[nosep,label=\alph*.]
		\item If $\gamma\circ s_1=s_2\circ\beta$ holds, then $s_1$ and $s_2$ are compatible. \label{a}
		\item When $B_1$ is discrete, every two sections $s_1$ and $s_2$ are compatible.
		\item When $\beta$ is surjective, for every section $s_1$ of $\pi_1$, there exists a section $s_2$ of $\pi_2$ such that $s_1$ and $s_2$ are compatible. Indeed, given a section $s_1$, we can define $s_2$ as the composition $\gamma\circ s_1\circ\eta$, where $\eta$ is a set-theoretic section of $\beta$ satisfying $\eta(0)=0$, because $\gamma\circ s_1=s_2\circ\beta$ holds and we can appeal to point \ref{a} above.
	\end{enumerate}
\end{examples}

Now, for the convenience of the reader, we recall the following result.

\begin{lemma} \label{sum}
	Let $G$ be a topological group and let $X$ be a topological space. If $f,g:X\to G$ are continuous maps at $x\in X$, then the map $f+g:X\to G$ defined as $(f+g)(x)=f(x)+g(x)$ is continuous at $x$.
\end{lemma}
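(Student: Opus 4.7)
The plan is to realize $f+g$ as a composition $m\circ(f,g)$, where $m:G\times G\to G$ denotes the group operation of $G$, and then invoke continuity of $m$ together with the universal property of the product topology on $G\times G$.

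First I would recall that, by the very definition of a topological group, the addition map $m:G\times G\to G$, $(a,b)\mapsto a+b$, is continuous, where $G\times G$ carries the product topology. Next, given that $f$ and $g$ are continuous at $x$, I would show that the pairing $(f,g):X\to G\times G$, $y\mapsto(f(y),g(y))$, is continuous at $x$. This is a standard consequence of the universal property of the product topology: a basic open neighborhood of $(f(x),g(x))$ in $G\times G$ has the form $U\times V$ with $U\ni f(x)$ and $V\ni g(x)$ open in $G$, and $(f,g)^{-1}(U\times V)=f^{-1}(U)\cap g^{-1}(V)$, which contains an open neighborhood of $x$ by the hypothesis on $f$ and $g$.

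Finally, since $f+g=m\circ(f,g)$, the map $f+g$ is continuous at $x$ as a composition of a map continuous at $x$ with a map continuous at the image point $(f(x),g(x))$. The main (and only) point to be careful about is that we are working with continuity at a single point rather than global continuity, but the composition argument works pointwise without any modification because continuity of $m$ everywhere is enough to handle continuity of $m\circ(f,g)$ at the specific point $x$. There is no real obstacle here; the lemma is essentially a restatement of the continuity of the group law together with the product topology characterization.
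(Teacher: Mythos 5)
Your proof is correct and follows essentially the same route as the paper: both write $f+g$ as the group operation composed with the pairing $y\mapsto(f(y),g(y))$ into $G\times G$ with the product topology. The only cosmetic difference is that the paper factors the pairing as $(f\times g)\circ\Delta$ through the diagonal, while you verify its continuity at $x$ directly on basic open sets; the content is identical.
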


\begin{proof}
	The map $f+g$ equals the composition 
		\[ X \overset{\Delta}{\to} X\times X \overset{f\times g}{\to} G\times G \overset{+_G}{\to} G, \]
	where $X\times X$ and $G\times G$ have the product topology, $\Delta$ is the diagonal map, $f\times g$ is the product map and $+_G$ is the group operation. Since $f$ and $g$ are continuous at $x\in X$, $f\times g$ is continuous at $(x,x)\in X\times X$. Recall that $\Delta$ is continuous since $\Delta^{-1}(U\times V)=U\cap V$ for every pair of open subsets $U,V\subseteq X$. Hence, as $+_G$ is continuous, we conclude that $f+g=+_G\circ(f\times g)\circ\Delta$ is continuous at $x\in X$.
\end{proof}

The following proposition is a strengthening of Proposition \ref{P func} and tells us that the commutativity condition $\gamma\circ s_1=s_2\circ\beta$ is not really necessary to deduce the continuity of $\gamma$ from those of $\alpha$ and $\beta$. 

\begin{proposition} \label{P3 generalized}
	Let
		\begin{equation}  \label{comp diagram}
			\begin{tikzcd}
				\s{E}_1: 0 \ar{r} & A_1 \ar{r}{\iota_1} \ar{d}{\alpha} & G_1 \ar{r}{\pi_1} \ar{d}{\gamma} & B_1 \ar{r} \ar{d}{\beta} & 0 \\
				\s{E}_2: 0 \ar{r} & A_2 \ar{r}{\iota_2} & G_2 \ar{r}{\pi_2} & B_2 \ar{r} & 0
			\end{tikzcd}
		\end{equation}
	be a commutative diagram of abelian groups, where $A_i$ and $B_i$ are first countable abelian topological groups and $\alpha$ and $\beta$ are continuous. For $i=1,2$, let $s_i$ be a topologizing section of $\pi_i$ and endow $G_i$ with the topology $\tau_{\s{E}_i,s_i}$. If $s_1$ and $s_2$ are compatible \emph{(see Definition \ref{compatible})}, then $\gamma$ is continuous.
\end{proposition}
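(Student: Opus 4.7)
The plan is to reduce continuity of $\gamma$ to continuity at $0_{G_1}$, which suffices since $\gamma$ is a homomorphism between topological groups. Using the algebraic isomorphisms $\theta_{s_i}:(A_i\times B_i,+_{s_i})\to G_i$, every element of $G_1$ is uniquely of the form $\iota_1(a)+s_1(b)$, and the defining equation of $\sigma_{s_1,s_2}$ rewrites as $\gamma\circ s_1(b)=\iota_2(\sigma_{s_1,s_2}(b))+s_2(\beta(b))$ (note that $\gamma s_1(b)-s_2\beta(b)\in\ker\pi_2=\iota_2(A_2)$ by the commutativity of the right square, so $\sigma_{s_1,s_2}$ is indeed well-defined). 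Combined with $\gamma\iota_1=\iota_2\alpha$, this yields the key identity
\begin{equation*}
\gamma(\iota_1(a)+s_1(b))=\iota_2\bigl(\alpha(a)+\sigma_{s_1,s_2}(b)\bigr)+s_2(\beta(b))
\end{equation*}
for all $(a,b)\in A_1\times B_1$.

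By construction of $\tau_{\s{E}_2,s_2}$ as the transport along $\theta_{s_2}$ of the topology $\tau_{A_2,B_2}$, whose basis of neighborhoods of $(0_{A_2},0_{B_2})$ consists of products of open neighborhoods (see \cite[p.~33, Theorem 3.22]{Stroppel}), a basis of open neighborhoods of $0_{G_2}$ is given by the sets $V=\iota_2(U_2)+s_2(V_2)$ with $U_2\subseteq A_2$ and $V_2\subseteq B_2$ open neighborhoods of the identity. Fixing such a $V$, I would pick a symmetric open neighborhood $U_2'$ of $0_{A_2}$ with $U_2'+U_2'\subseteq U_2$.

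Using that $\alpha$ is continuous at $0_{A_1}$, that $\beta$ is continuous at $0_{B_1}$, and that $\sigma_{s_1,s_2}$ is continuous at $0_{B_1}$ by the compatibility hypothesis, I would choose open neighborhoods $U_1\subseteq A_1$ of $0_{A_1}$ and $V_1\subseteq B_1$ of $0_{B_1}$ satisfying $\alpha(U_1)\subseteq U_2'$, $\sigma_{s_1,s_2}(V_1)\subseteq U_2'$, and $\beta(V_1)\subseteq V_2$. Then $W:=\iota_1(U_1)+s_1(V_1)$ is a basic open neighborhood of $0_{G_1}$ in $\tau_{\s{E}_1,s_1}$, and the key identity together with $U_2'+U_2'\subseteq U_2$ and $\beta(V_1)\subseteq V_2$ immediately yields $\gamma(W)\subseteq V$. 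This establishes continuity of $\gamma$ at $0_{G_1}$, hence on all of $G_1$.

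The conceptual heart of the argument, and the only step that requires any thought, is the derivation of the key identity: the compatibility hypothesis was designed precisely so that the failure of the diagram to commute at the level of the (non-homomorphic) sections $s_i$ is recorded by the single map $\sigma_{s_1,s_2}$, whose continuity at $0_{B_1}$ is what one needs to compensate for the twist between the two sections. Once the identity is in hand, the explicit description of neighborhoods of zero in $\tau_{\s{E}_i,s_i}$ reduces the rest to a routine neighborhood-chase, and no countability or local compactness assumption beyond those already built into the existence of the topologizing sections is used.
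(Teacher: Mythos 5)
Your proof is correct and is essentially the paper's argument: your key identity $\gamma(\iota_1(a)+s_1(b))=\iota_2(\alpha(a)+\sigma_{s_1,s_2}(b))+s_2(\beta(b))$ is exactly the paper's decomposition $\psi=\psi_1+\psi_2$ of the transported map $\theta_{s_2}^{-1}\circ\gamma\circ\theta_{s_1}$, with $\psi_1(a,b)=(\alpha(a),0)$ and $\psi_2(a,b)=(\sigma_{s_1,s_2}(b),\beta(b))$. The only cosmetic difference is that you carry out the neighborhood chase directly in $G_1,G_2$ (choosing $U_2'+U_2'\subseteq U_2$ by hand), whereas the paper works in the product models $(A_i\times B_i,+_{s_i})$ and invokes its Lemma on sums of maps continuous at a point.
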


\begin{proof}
	Using diagram \eqref{nagao diagram}, we have the following commutative diagram of abelian topological groups whose rows are topological extensions
		\begin{equation*} 
			\begin{tikzcd}
				0 \ar{r} & A_1 \ar{r}{i_1} \ar[d,equal] & A_1\times B_1 \ar{r}{p_1} \ar{d}{\theta_{s_1}} & B_1 \ar{r} \ar[d,equal] & 0 \\
				0 \ar{r} & A_1 \ar{r}{\iota_1} \ar{d}{\alpha} & G_1 \ar{r}{\pi_1} \ar{d}{\gamma} & B_1 \ar{r} \ar{d}{\beta} & 0 \\
				0 \ar{r} & A_2 \ar{r}{\iota_2} & G_2 \ar{r}{\pi_2} \ar{d}{\theta_{s_2}^{-1}} & B_2 \ar{r} & 0 \\
				0 \ar{r} & A_2 \ar{r}{i_2} \ar[u,equal] & A_2\times B_2 \ar{r}{p_2} & B_2 \ar{r} \ar[u,equal] & 0.
			\end{tikzcd}
		\end{equation*}
	Here the group operation on $A_i\times B_i$ ($i=1,2$) is given by \eqref{operation}. Let us define $\psi$ as the composition of all middle vertical arrows above, i.e.,
		\begin{align} \label{psi}
		\begin{split}
			\psi:A_1\times B &\to A_2\times B_2 \\
			(a,b) &\mapsto (\theta_{s_2}^{-1}\circ\gamma\circ\theta_{s_1})(a,b)
		\end{split}
		\end{align}
	Note that $\gamma$ is continuous if and only if  $\psi$ is continuous at $(0_{A_1},0_{B_1})$ since both maps are group homomorphisms. Now, let us define the maps $\psi_1,\psi_2:A_1\times B_1\to A_2\times B_2$ as follows
		\begin{align} \label{psis}
			\psi_1(a,b):=(\alpha(a),0) \quad \text{and} \quad \psi_2(a,b):=(\sigma_{s_1,s_2}(b),\beta(b)),
		\end{align}
	where $\sigma_{s_1,s_2}$ is the map \eqref{sigma}. We claim that
		\begin{equation} \label{sum psi} 
			\psi(a,b)=\psi_1(a,b) + \psi_2(a,b).
		\end{equation}
	Indeed, by definition of $\theta_{s_i}$ (see \eqref{theta}) and the commutativity of the diagram \eqref{comp diagram}, we have the following equalities
		\begin{align*}
			\phi_{s_2}((\alpha(a),0)+(\sigma_{s_1,s_2}(b),\beta(b))) &= \iota_2(\alpha(a)+\sigma_{s_1,s_2}(b)) + s_2(\beta(b)) \\
			&= \iota_2(\alpha(a)) + (\gamma\circ s_1)(b)-(s_2\circ\beta)(b) + s_2(\beta(b)) \\
			&= \gamma(\iota_1(a)) + \gamma(s_1(b)) \\
			&= (\gamma\circ\phi_{s_1})(a,b).
		\end{align*}
	Let us suppose that $s_1$ and $s_2$ are compatible. We will prove that $\psi$ is continuous. Since $\psi$ is a homomorphism of abelian groups, we only have to check that $\psi$ is continuous at $(0_{A_1},0_{B_1})$. Recall that the set 
	\[ \s{V}_i := \{U_i\times V_i\subset A_i\times B_i:U_i\text{ and } V_i \text{ are open neighborhoods of } 0_{A_i} \text{ and } 0_{B_i} \text{ respectively}\} \]
	is a basis of open neighborhoods of $(0_{A_i},0_{B_i})$ in $A_i\times B_i$.  Since $s_1$ and $s_2$ are compatible, the map $\sigma_{s_1,s_2}$ is continuous at $0_{A_2}$ (see Definition \ref{compatible}). Then, the maps $\psi_1,\psi_2:A_1\times B_1\to A_2\times B_2$ are continuous at $(0_{A_1},0_{B_1})$. Indeed, for every $U_2\times V_2\in\s{V}_2$ we have
	\begin{align*}
		\psi_1^{-1}(U_2\times V_2) &= \alpha^{-1}(U_2)\times B_1, \\
		\psi_2^{-1}(U_2\times V_2) &= A_1\times(\beta^{-1}(V_2)\cap\sigma_{s_1,s_2}^{-1}(U_2)),
	\end{align*}
which are neighborhoods of $(0_{A_1},0_{B_1})$ in $A_1\times B_1$. Hence, by \eqref{sum psi} and Lemma \ref{sum}, $\psi=\psi_1+\psi_2$ is continuous at $(0_{A_1},0_{B_1})$. 
\end{proof}

In contrast to Proposition \ref{P func}, the previous result only allows us to deduce the continuity of $\gamma$ from the continuity of $\alpha$ and $\beta$. However, in the following case we can also ensure the strictness of $\gamma$ from the strictness of $\alpha$ and $\beta$.

\begin{proposition} \label{open fibers}
	Let $s_i$ be a set-theoretic section of $\pi_i$ ($i=1,2$) as in Proposition \ref{P3 generalized}. If the map $\sigma_{s_1,s_2}$ \eqref{sigma} has open fibers, then $\gamma$ is continuous (resp. continuous and strict) if and only if $\alpha$ and $\beta$ are continuous (resp. continuous and strict).
\end{proposition}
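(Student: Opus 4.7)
The plan is to work via the topological isomorphisms $\theta_{s_i}:(A_i\times B_i,+_{s_i},\tau_{A_i,B_i})\to G_i$, under which $\gamma$ corresponds to $\psi(a,b) = (\alpha(a)+\sigma(b),\beta(b))$. Following the proof of Proposition~\ref{P3 generalized}, I will exploit the decomposition $\psi = \psi_1 + \psi_2$, where $\psi_1(a,b) = (\alpha(a),0)$ and $\psi_2(a,b) = (\sigma(b),\beta(b))$.

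For continuity, the open-fibers hypothesis forces $V':=\sigma^{-1}(\{0_{A_2}\})$ to be an open neighborhood of $0_{B_1}$, so $\sigma$ is (locally constant, hence) continuous at $0_{B_1}$; this makes $s_1$ and $s_2$ compatible in the sense of Definition~\ref{compatible}, so Proposition~\ref{P3 generalized} yields $\gamma$ continuous whenever $\alpha$ and $\beta$ are. The converse is standard: $\alpha = \iota_2^{-1}\circ\gamma\circ\iota_1$ is continuous because the $\iota_i$ are topological embeddings, and $\beta$ is continuous because $\beta\circ\pi_1 = \pi_2\circ\gamma$ is continuous and $\pi_1$ is a quotient map.

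The key observation for strictness is that on the open set $A_1\times V'$, the map $\sigma$ vanishes, and so
\[
\psi|_{A_1\times V'}(a,b) = (\alpha(a),0)+_{s_2}(0,\beta(b)) = (\alpha(a),\beta(b));
\]
in particular, $\psi$ restricts to the Cartesian product $\alpha\times\beta$ on $A_1\times V'$. Since strictness of a group homomorphism is a local condition at the identity, it suffices to show that for every basic open $U_1\times V_1$ of $(0_{A_1},0_{B_1})$ with $V_1\subseteq V'$, the image $\psi(U_1\times V_1)=\alpha(U_1)\times\beta(V_1)$ is a neighborhood of $(0,0)$ in $\im\psi$ with its subspace topology from $(A_2\times B_2,\tau_{A_2,B_2})$. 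Assuming $\alpha$ and $\beta$ strict, one chooses open $W\subseteq A_2$ and $V_2\subseteq B_2$ with $W\cap\alpha(A_1)=\alpha(U_1)$ and $V_2\cap\beta(B_1)=\beta(V_1)$; the open-fibers hypothesis makes $V'':=\sigma^{-1}(\alpha(A_1))=\bigcup_{a\in\alpha(A_1)}\sigma^{-1}(\{a\})$ open in $B_1$, and one shrinks $V_2$ so that $\beta^{-1}(V_2)\subseteq V''$, which forces $\sigma(b)\in\alpha(A_1)$ whenever $\beta(b)\in V_2$. This yields $(W\times V_2)\cap\im\psi\subseteq\alpha(U_1)\times\beta(V_1)$, hence $\psi$ (equivalently $\gamma$) is strict.

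The converse strictness direction follows routinely: $\gamma$ strict implies $\alpha$ strict via $\iota_2\circ\alpha = \gamma\circ\iota_1$ and the embedding property of the $\iota_i$, and $\beta$ strict via $\beta\circ\pi_1 = \pi_2\circ\gamma$ and the quotient property of $\pi_1$. The main obstacle lies in the $V_2$-shrinking step: one must ensure $\beta^{-1}(V_2)\subseteq V''$, which requires combining the openness of $V''$ (which contains every fiber of $\sigma$ landing inside $\alpha(A_1)$) with the local topological structure of $\ker\beta$ in $B_1$, and this is precisely where the full strength of the open-fibers assumption is essential, as it provides the discrete-like control over $\sigma$ that the mere continuity used in Proposition~\ref{P3 generalized} did not.
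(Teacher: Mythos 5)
Your continuity argument is correct and is the paper's own: open fibers make $\sigma_{s_1,s_2}$ constant on the open fiber $V'=\sigma_{s_1,s_2}^{-1}(\{0_{A_2}\})\ni 0_{B_1}$, hence continuous at $0_{B_1}$, so $s_1,s_2$ are compatible and Proposition \ref{P3 generalized} applies; the converse is standard. The gap is in the strictness direction, at the $V_2$-shrinking step. You need an open neighbourhood $V_2$ of $0_{B_2}$ with $\beta^{-1}(V_2)\subseteq V'':=\sigma_{s_1,s_2}^{-1}(\alpha(A_1))$. But $\beta^{-1}(V_2)\supseteq\ker\beta$ for \emph{every} neighbourhood $V_2$ of $0_{B_2}$, so this forces $\sigma_{s_1,s_2}(\ker\beta)\subseteq\alpha(A_1)$, which does not follow from the hypotheses: for $b\in\ker\beta$ one has $\sigma_{s_1,s_2}(b)=\iota_2^{-1}(\gamma(s_1(b)))$, and $\gamma(s_1(b))$ lands in $\iota_2(A_2)$ but not necessarily in $\iota_2(\alpha(A_1))$. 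Concretely, take all groups discrete (so the open-fibers hypothesis and all strictness claims hold trivially): first row $0\to 0\to\Z/2\xrightarrow{\mathrm{id}}\Z/2\to 0$, second row $0\to\Z/2\to\Z/4\to\Z/2\to 0$, with $\gamma(1)=2$, $\alpha=0$, $\beta=0$, $s_1=\mathrm{id}$, $s_2(1)=1$. Then $\sigma_{s_1,s_2}(1)=1\notin\alpha(A_1)=\{0\}$ while $1\in\ker\beta$, so no admissible $V_2$ exists; moreover your constraints on $W$ (namely $W\cap\alpha(A_1)=\alpha(U_1)$) allow $W=\Z/2$, for which $(1,0)\in(W\times V_2)\cap\im\psi$ but $(1,0)\notin\alpha(U_1)\times\beta(V_1)$. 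The proposition itself is true in this example (everything is discrete), but your argument does not establish it.

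The structural reason the step cannot be repaired this way is that near a point $\psi(a,b)$ the image $\im\psi$ is not a piece of $\im\alpha\times\im\beta$ but a translate of such a piece by the offset $(\sigma_{s_1,s_2}(b),0)$, and that offset cannot in general be absorbed into $\alpha(A_1)$. The paper's proof avoids the issue entirely: it writes $\psi(U_1\times V_1)$ as the union over $b\in V_1$ of the translates $(\alpha(U_1)\times\beta(\sigma_{s_1,s_2}^{-1}(\{\sigma_{s_1,s_2}(b)\})\cap V_1))+(\sigma_{s_1,s_2}(b),0)$, and uses the open fibers plus the strictness of $\alpha$ and $\beta$ to see that each such translate is already open in $\im\psi$ (translation by a fixed element being a homeomorphism), so no single basic open set $W\times V_2$ around $(0,0)$ is ever required. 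To salvage your ``local at the identity'' framing you would have to control every sheet $(\ldots)+(\sigma_{s_1,s_2}(b),0)$ meeting a neighbourhood of $(0,0)$, which amounts to redoing that computation.
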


\begin{proof}
	Firstly, note that if $\sigma_{s_1,s_2}$ has open fibers, then it is continuous. Thus $s_1$ and $s_2$ are compatible. Hence, by Proposition \ref{P3 generalized}, $\gamma$ is continuous if and only if $\alpha$ and $\beta$ are continuous. Now, let us suppose that $\alpha$ and $\beta$ are continuous and strict. In order to prove that $\gamma$ is strict, we have to prove that $\psi$ in \eqref{psi} is strict. Then we have to check that $\psi(U_1\times V_1)$ is open in $\im\psi$, for every $U_1\times V_1\in\s{V}_1$. Recall that $h_s$ satisfies the equality $h_s(b,0)=h_s(0,b)=0$ for all $b\in B$ and therefore we have the equality 
	\begin{equation} \label{cord zero}
		(a,b)+_s(a',0)=(a+a',b),
	\end{equation}
	for all $a,a'\in A$ and $b\in B$. Then, by \eqref{psis} and \eqref{sum psi}, we have
	\begin{align}
		\im\psi	= \im\psi_1+\im\psi_2 \nonumber &= (\im\alpha\times\{0\})+\left(\bigcup_{b\in B_1}\{\sigma_{s_1,s_2}(b)\}\times\beta(\sigma_{s_1,s_2}^{-1}(\{\sigma_{s_1,s_2}(b)\}))\right) \nonumber \\
		\begin{split} \label{strict 1}
			&= \bigcup_{b\in B_1}(\im\alpha\times\beta(\sigma_{s_1,s_2}^{-1}(\{\sigma_{s_1,s_2}(b)\})))+(\sigma_{s_1,s_2}(b),0) 	
		\end{split} \\
		&\subseteq \bigcup_{b\in B_1}(\im\alpha\times\im\beta)+(\sigma_{s_1,s_2}(b),0), \nonumber
	\end{align}
	here the equality \eqref{strict 1} follows from \eqref{cord zero}. Note that $\sigma_{s_1,s_2}^{-1}(\{\sigma_{s_1,s_2}(b)\})$ is open in $B_1$ since $\sigma_{s_1,s_2}$ has open fibers. Thus $\beta(\sigma_{s_1,s_2}^{-1}(\{\sigma_{s_1,s_2}(b)\}))$ is an open subset of $\im\beta$ since $\beta$ is strict. Let $U_1\times V_1\in\s{V}_1$. Then, similarly to \eqref{strict 1}, we have
	\[ \psi(U_1\times V_1) = \bigcup_{b\in V_1}(\alpha(U_1)\times\beta(\sigma_{s_1,s_2}^{-1}(\{\sigma_{s_1,s_2}(b)\})\cap V_1))+(\sigma_{s_1,s_2}(b),0). \]
	Observe that $\alpha(U_1)$ is open in $\im\alpha$ since $\alpha$ is assumed to be strict. Furthermore, since $\beta$ is strict, $\beta(\sigma_{s_1,s_2}^{-1}(\{\sigma_{s_1,s_2}(b)\})\cap V_1)$ is open in $\im\beta$ and therefore also in $\beta(\sigma_{s_1,s_2}^{-1}(\{\sigma_{s_1,s_2}(b)\}))$. Hence, by the continuity of the group operation, the subset
	\[ (\alpha(U_1)\times\beta(\sigma_{s_1,s_2}^{-1}(\{\sigma_{s_1,s_2}(b)\})\cap V_1))+(\sigma_{s_1,s_2}(b),0) \]
	is open in $\im\psi$ for all $b\in V_1$. Therefore, $\gamma$ is continuous. Conversely, when $\gamma$ is continuous and strict, $\alpha$ is so since $A_i$ is a subspace of $G_i$. Since $\pi_2$ is a quotient map and $\pi$ is surjective, $\beta$ is continuous and strict (see \cite[Lemma 1.8]{GA24}).
\end{proof}

In this way, we obtain the following corollary.

\begin{corollary} \label{P3 discrete}
	Let
	\begin{equation*} 
		\begin{tikzcd}
			\s{E}_1: 0 \ar{r} & A_1 \ar{r}{\iota_1} \ar{d}{\alpha} & G_1 \ar{r}{\pi_1} \ar{d}{\gamma} & B_1 \ar{r} \ar{d}{\beta} & 0 \\
			\s{E}_2: 0 \ar{r} & A_2 \ar{r}{\iota_2} & G_2 \ar{r}{\pi_2} & B_2 \ar{r} & 0
		\end{tikzcd}
	\end{equation*}
	be a commutative diagram of abelian groups, whose rows are topological extensions of first countable abelian topological groups. Assume that $B_1$ is discrete (resp. $A_2$ is indiscrete). Then $\gamma$ is continuous if and only if $\alpha$ is continuous (resp. $\beta$ is continuous). Furthermore, when $B_1$ is discrete, $\gamma$ is continuous and strict if, and only if, $\alpha$ and $\beta$ are continuous and strict.
\end{corollary}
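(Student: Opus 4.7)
The plan is to verify in each case the hypotheses of Proposition \ref{P3 generalized} or Proposition \ref{open fibers}, applied to some choice of topologizing sections $s_1$ of $\pi_1$ and $s_2$ of $\pi_2$ (which exist by Proposition \ref{nagao theorem}, since the rows are topological extensions of first countable groups and, for $i=1,2$, the topology on $G_i$ agrees with some $\tau_{\s{E}_i,s_i}$).

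Case a): Suppose $B_1$ is discrete. Then every subset of $B_1$ is open, so \emph{a fortiori} the fibers of $\sigma_{s_1,s_2}\colon B_1\to A_2$ are open. Proposition \ref{open fibers} then yields the equivalence: $\gamma$ is continuous (resp. continuous and strict) if and only if both $\alpha$ and $\beta$ are. Since $B_1$ is discrete, $\beta\colon B_1\to B_2$ is automatically continuous, and likewise automatically strict (cf. Examples \ref{strict}.b), so the first clause simplifies to ``$\gamma$ continuous $\iff \alpha$ continuous'', while the strict clause stands as stated.

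Case b): Suppose $A_2$ is indiscrete, i.e. carries the topology $\{\emptyset,A_2\}$. Then every map into $A_2$ is trivially continuous; in particular, $\sigma_{s_1,s_2}\colon B_1\to A_2$ is continuous at $0_{B_1}$, so $s_1$ and $s_2$ are compatible in the sense of Definition \ref{compatible}. Likewise, $\alpha\colon A_1\to A_2$ is automatically continuous. If $\beta$ is continuous, Proposition \ref{P3 generalized} (with both $\alpha$ and $\beta$ continuous) gives the continuity of $\gamma$. Conversely, if $\gamma$ is continuous, then so is $\pi_2\circ\gamma=\beta\circ\pi_1$; since $\pi_1$ is a quotient map (being the surjection in a topological extension), this forces $\beta$ to be continuous. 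This establishes the equivalence $\gamma$ continuous $\iff \beta$ continuous in this case.

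No serious obstacle is expected: the entire argument is a bookkeeping exercise that converts the discreteness / indiscreteness hypotheses into the hypotheses ``$\sigma_{s_1,s_2}$ has open fibers'' and ``$\sigma_{s_1,s_2}$ is continuous at $0_{B_1}$'' respectively, so that Propositions \ref{open fibers} and \ref{P3 generalized} apply directly. The only mildly subtle point is the converse implication in Case b), which requires invoking that $\pi_1$ is a quotient map in order to descend the continuity of $\pi_2\circ\gamma$ through $\pi_1$; this is immediate from the definition of a topological extension.
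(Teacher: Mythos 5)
Your proposal is correct and follows essentially the same route as the paper: obtain topologizing sections from Proposition \ref{nagao theorem}, observe that $\sigma_{s_1,s_2}$ is automatically continuous at $0_{B_1}$ when $B_1$ is discrete or $A_2$ is indiscrete (and has open fibers when $B_1$ is discrete), and then invoke Propositions \ref{P3 generalized} and \ref{open fibers}, with the converses coming from the rows being topological extensions. One parenthetical claim is false, though harmless to your argument: a homomorphism out of a discrete group is \emph{not} automatically strict --- Examples \ref{strict}.b concerns a discrete \emph{target}, and the paper's own Example \ref{strict}.a (the inclusion of $\Z$ with the discrete topology into $\Z_p$) is a counterexample to what you assert; since you leave the strict clause ``as stated'' and derive it directly from Proposition \ref{open fibers}, nothing in the proof depends on this remark, but it should be deleted.
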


\begin{proof}
	By Propostion \ref{nagao theorem}, there exists topologizing section $s_i$ of $\pi_i$ such that the topology of $G_i$ is $\tau_{\s{E}_i,s_i}$. Now, the map $\sigma_{s_1,s_2}$ is always continuous if $B_1$ is discrete or $A_2$ is indiscrete. Hence, by Proposition \ref{P3 generalized}, $\gamma$ is continuous if $\alpha$ and $\beta$ are so. The converse follows from the fact that $\s{E}_i$ is a topological extension. Finally, when $B_1$ is discrete, the map $\sigma_{s_1,s_2}$ has open fibers. Then, the last assertion follows from Proposition \ref{open fibers}.
\end{proof}

\begin{example}
	Let $G$ and $H$ be first countable abelian topological groups and let $\gamma:G\to H$ be a homomorphism of abelian groups. Let us suppose that $\gamma(\overline{\{0_G\}})\subseteq\overline{\{0_H\}}$. Then $\gamma_\Haus$ is well-defined and, since $\overline{\{0_H\}}$ is indiscrete, $\gamma$ is continuous if and only is $\gamma_\Haus$ is continuous.
\end{example}

\subsection*{Applications using Pontryagin duality}

Using the relation between Hausdorff compact abelian groups and discrete abelian groups, we obtain the following result.

\begin{proposition} \label{uniqueness separation}
	Let $\s{E}:0\to A\overset{\iota}{\to} G\overset{\pi}{\to} B\to 0$ be an extension of abelian groups, where $A$ and $B$ are first countable locally compact abelian topological groups. Assume that at least one of the following conditions holds:
	\begin{enumerate}[nosep, label=\alph*)]
		\item $A$ is Hausdorff compact; or 
		\item $B$ is discrete and $A$ second countable.
	\end{enumerate}
	Then, for every pair of topologizing sections $s_i:B\to G$ $(i=1,2)$ of $\pi$, $(G,\tau_{\s{E},s_1})^*$ and $(G,\tau_{\s{E},s_2})^*$ are topologically isomorphic. In particular, $(G,\tau_{\s{E},s_1})_{\Haus}$ and $(G,\tau_{\s{E},s_2})_{\Haus}$ are topologically isomorphic.
\end{proposition}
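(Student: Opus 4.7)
The plan divides into the two cases.

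For case (b), $B$ is discrete and Remark \ref{choice discrete}(a) immediately forces $\tau_{\s{E},s_1}=\tau_{\s{E},s_2}$, so the two topological groups $(G,\tau_{\s{E},s_i})$ are literally the same and hence so are their Pontryagin duals and their separations. (The second-countability of $A$ is only used to allow Lemma \ref{haus exactness}(b) to apply if one wishes to recast the case uniformly on the dual side.)

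For case (a), the plan is to Hausdorffify, Pontryagin-dualize, and exploit discreteness of the resulting quotient to invoke Remark \ref{choice discrete}(a). Concretely, apply Lemma \ref{haus exactness}(a) to each $(\s{E},s_i)$ to obtain, for $i=1,2$, topological extensions in $\LCA$
\[ 0 \to A \to (G,\tau_{\s{E},s_i})_{\Haus} \to B_{\Haus} \to 0. \]
Then dualize using Proposition \ref{Pontryagin duality theorem} to obtain topological extensions in $\LCA$
\[ 0 \to B_{\Haus}^{*} \to \bigl((G,\tau_{\s{E},s_i})_{\Haus}\bigr)^{*} \to A^{*} \to 0, \]
in which $A^{*}$ is discrete since $A$ is compact Hausdorff. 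Because $q_G^{*}\colon (G_{\Haus})^{*}\to G^{*}$ is a topological isomorphism, the middle term here is canonically $(G,\tau_{\s{E},s_i})^{*}$.

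Now the outer groups and end maps of the dual extensions (duals of $\pi$ and $\iota$) are common to $i=1,2$, and Remark \ref{choice discrete}(a) applied to the dual extensions says that, for a fixed algebraic extension of $A^{*}$ by $B_{\Haus}^{*}$, the topology on the middle is uniquely determined. Thus it suffices to exhibit an isomorphism of the two dual algebraic extensions. I plan to do this by describing a $\tau_{\s{E},s_i}$-continuous character of $G$ as a pair $(\phi_A,\phi_{B,i})$ with $\phi_A\in A^{*}$ and $\phi_{B,i}\colon B\to\mathbb{T}$ continuous at $0_B$ satisfying the cocycle
\[ \phi_{B,i}(b+b')-\phi_{B,i}(b)-\phi_{B,i}(b')=-\phi_A(h_{s_i}(b,b')), \]
and then checking that the map $(\phi_A,\phi_{B,1})\mapsto(\phi_A,\phi_{B,1}-\phi_A\circ\sigma_{s_1,s_2})$ defines a group isomorphism compatible with the embedding of $B^{*}$ and the projection onto $A^{*}$. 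The verification relies on the cocycle identity $h_{s_1}-h_{s_2}=\partial\,\sigma_{s_1,s_2}$ (a direct consequence of Definition \ref{compatible}) and, crucially, on the fact that in case (a) each $s_i$ restricts to a genuine group homomorphism on $\overline{\{0_B\}}$, because $A$ Hausdorff forces $h_{s_i}$ to vanish on $\overline{\{0_B\}}\times\overline{\{0_B\}}$ by continuity at $(0_B,0_B)$. This structural fact, needed to make the construction well-defined modulo the respective separations, is the principal obstacle I anticipate.

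Finally, the ``in particular'' clause follows from Pontryagin double duality applied in $\LCA$: since each $(G,\tau_{\s{E},s_i})_{\Haus}$ lies in $\LCA$, one has $(G,\tau_{\s{E},s_i})_{\Haus}\cong\bigl((G,\tau_{\s{E},s_i})^{*}\bigr)^{*}$, so a topological isomorphism between the two duals yields one between the two Hausdorffifications by applying $(-)^{*}$ once more.
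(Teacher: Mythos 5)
Your handling of case (b) is fine and is actually more direct than the paper's: Remark \ref{choice discrete}(a) gives $\tau_{\s{E},s_1}=\tau_{\s{E},s_2}$ on the nose, whereas the paper folds case (b) into case (a) by a double dualization; the ``in particular'' clause via $(-)^{**}\cong(-)_{\Haus}$ also matches the paper. The gap is in case (a), at precisely the step you identify as the crux. The map $(\phi_A,\phi_{B,1})\mapsto(\phi_A,\phi_{B,1}-\phi_A\circ\sigma_{s_1,s_2})$ is nothing but the identity on characters rewritten in $s_2$-coordinates: if $\phi_A=\chi\circ\iota$ and $\phi_{B,1}=\chi\circ s_1$ for a character $\chi$ of $G$, then $\phi_{B,1}-\phi_A\circ\sigma_{s_1,s_2}=\chi\circ s_2$. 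Hence your map lands in $(G,\tau_{\s{E},s_2})^*$ only if every $\tau_{\s{E},s_1}$-continuous character is $\tau_{\s{E},s_2}$-continuous, i.e.\ only if $\phi_A\circ\sigma_{s_1,s_2}$ is continuous at $0_B$ for every $\phi_A\in A^*$ --- which fails exactly when the two topologies differ in a way that $A^*$ detects. Concretely: take $A=\Z/2$, $B=\prod_{n\in\N}\Z/2$, $G=A\times B$ the split extension, $s_1(b)=(0,b)$ and $s_2(b)=(g(b),b)$ with $g:B\to\Z/2$ a discontinuous homomorphism. Both cocycles vanish, both sections are topologizing, $\tau_{\s{E},s_1}\neq\tau_{\s{E},s_2}$ by Proposition \ref{nagao comparison}, and the character $(a,b)\mapsto\psi(a)$ (with $\psi$ the nontrivial character of $\Z/2$) is $\tau_{\s{E},s_1}$-continuous but not $\tau_{\s{E},s_2}$-continuous. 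So the two dual groups are genuinely different subgroups of $\Hom(G,\mathbb{T})$, and the map you propose is not well defined into the target.

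There is also a structural obstruction to the plan, not just to this particular formula: an isomorphism of the two dual extensions ``compatible with the embedding of $B^*$ and the projection onto $A^*$'' is an equivalence of extensions, and (since the topology on an extension with discrete quotient is unique, and by double duality) it would force $\s{E}_1$ and $\s{E}_2$ to be equivalent as topological extensions of $B_{\Haus}$ by $A$. That is strictly more than the hypotheses provide when $s_1$ and $s_2$ define distinct classes in $\Ext(B,A)$ (cf.\ Remark \ref{choice discrete}(b)); the proposition only asserts an abstract topological isomorphism of the middle groups, and a correct argument must aim at that weaker conclusion. For what it is worth, the paper's own proof is terse at the very same point --- once $A^*$ is known to be discrete it simply asserts that the two duals are isomorphic --- so you have correctly located the step that needs justification, but the construction you offer does not supply it.
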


\begin{proof}
	Let $s_1$ and $s_2$ be topologizing sections of $\pi$ $(i=1,2)$. Then
	\[ \s{E}_i:0\to A\overset{\iota}{\to} (G,\tau_{\s{E},s_i}) \overset{\pi}{\to} B\to 0 \]
	is a topological extension of first countable locally compact abelian topological groups. By Lemma \ref{haus exactness}, we obtain the following topological extension
	\[ \s{E}_{s_i}^*:0\to B^* \overset{\pi^*}{\to} (G,\tau_{\s{E},s_i})^* \overset{\iota^*}{\to} A^*\to 0. \]
	Observe that $(G,\tau_{\s{E},s_i})^{***}$ is isomorphic to $(G,\tau_{\s{E},s_i})^*$ by Pontryagin duality since $(G,\tau_{\s{E},s_i})^*$ belongs to $\LCA$. Moreover, recall that the Pontryagin dual of a Hausdorff and compact (resp. discrete) abelian topological group is discrete (resp. Hausdorff and compact). Thus we only have to prove the case when $A$ is Hausdorff and compact. If $A$ is Hausdorff and compact, then $A^*$ is discrete (see Proposition \ref{Pontryagin duality theorem}) and therefore $(G,\tau_{\s{E},s_1})^*$ and $(G,\tau_{\s{E},s_2})^*$ are isomorphic topological groups. The last statement follows from the fact that $G^{**}$ is isomorphic to $G_{\Haus}$ for every locally compact abelian topological group.
\end{proof}

Recalling that the Pontryagin dual of a abelian topological groups is always Hausdorff, we deduce the following corollary from the proposition above.

\begin{corollary}
	Let $\s{E}:0\to A\overset{\iota}{\to} G\overset{\pi}{\to} B\to 0$ be a topological extension of first countable and locally compact abelian topological groups. Assume that $A$ is Hausdorff compact or, $B$ is discrete and $A$ is second countable. Then the compact-open topology on $\Hom_{\cts}(G,\mathbb{T})$ is the unique topology such that
	\[ 0 \to \Hom_{\cts}(B,\mathbb{T}) \overset{\pi^*}{\to} \Hom_{\cts}(G,\mathbb{T}) \overset{\iota^*}{\to} \Hom_{\cts}(A,\mathbb{T}) \to 0 \]
	is a topological extension in $\LCA$ when we equip the left- and right-hand terms with the compact-open topology. Moreover, the compact-open topology on $\Hom_{\cts}(G,\mathbb{T})$ is equivalent to $\tau_{\s{E}^*,s}$ for any topologizing section $s:\Hom_{\cts}(A,\mathbb{T})\to \Hom_{\cts}(G,\mathbb{T})$ of $\iota^*$.
\end{corollary}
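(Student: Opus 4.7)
The plan has three parts. First, I would settle existence by applying the last assertion of Lemma \ref{haus exactness} to $\s{E}$: case (a) of the corollary is exactly Lemma \ref{haus exactness}(a), and case (b) is a special instance of Lemma \ref{haus exactness}(b) (since discrete implies Hausdorff). Denoting the compact-open topology on $G^*$ by $\tau_{co}$, this gives that $\s{E}^*$ with $(G^*, \tau_{co})$ as middle term is a topological extension in $\LCA$.

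The bulk of the work is uniqueness. Let $\tau$ be any topology on $G^*$ for which $\s{E}^*$, with $(G^*, \tau)$ as middle term, is a topological extension in $\LCA$. In case (a), $A$ Hausdorff compact implies $A^*$ discrete by Proposition \ref{Pontryagin duality theorem}, so $\pi^*(B^*) = \ker(\iota^*)$ is open in $(G^*, \tau)$ as the preimage of the open subgroup $\{0\} \subset A^*$. The strictness of $\pi^*$ (built into the topological extension) forces the subspace topology on $\pi^*(B^*)$ to coincide with the compact-open topology on $B^*$, so a neighborhood basis of $0$ in $(G^*, \tau)$ is given by the compact-open neighborhood basis of $0$ in $B^*$. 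The same applies to $(G^*, \tau_{co})$, and since a group topology is determined by its neighborhood filter at the identity, $\tau = \tau_{co}$. For case (b), $B$ discrete forces $B^*$ Hausdorff compact, and I would reduce to case (a) by Pontryagin duality: dualizing both topological extensions $\s{E}^*$ yields topological extensions in $\LCA$ of the form $0 \to A_{\Haus} \to M \to B \to 0$ (using $B^{**} = B$ since $B$ is discrete Hausdorff, together with $A^{**} = A_{\Haus}$), and the discrete quotient $B$ places us in a situation analogous to case (a) in which $A_{\Haus}$ is open in $M$; the case-(a) argument then determines the topology on each such $M$ uniquely. Applying Pontryagin duality once more recovers $\tau = \tau_{co}$.

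For the ``moreover'' assertion, given a topologizing section $s : A^* \to G^*$ of $\iota^*$, Definition \ref{nagao top} and diagram \eqref{nagao diagram} show that $\tau_{\s{E}^*, s}$ is a topology on $G^*$ making $0 \to B^* \to (G^*, \tau_{\s{E}^*, s}) \to A^* \to 0$ a topological extension with $A^*, B^*$ in their compact-open topologies. Since Hausdorff-ness and local compactness are extension properties (see Section 2) and $A^*, B^* \in \LCA$, the middle term lies in $\LCA$, and the uniqueness proved above forces $\tau_{\s{E}^*, s} = \tau_{co}$. The main obstacle will be case (b) of the uniqueness step: the abstract middle groups $(G^*, \tau)^*$ and $(G^*, \tau_{co})^* \cong G_{\Haus}$ could a priori differ as abelian groups, so one must verify carefully that the uniqueness obtained on the dual side, combined with Pontryagin biduality, really does force the equality $\tau = \tau_{co}$ on $G^*$ itself rather than merely an abstract isomorphism of topological groups $(G^*, \tau) \cong (G^*, \tau_{co})$.
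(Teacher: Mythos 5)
Your overall route is essentially the paper's: the paper obtains existence from Lemma \ref{haus exactness} and deduces uniqueness from Proposition \ref{uniqueness separation} applied to $\s{E}^*$ (whose terms are automatically Hausdorff). Your case (a) is a direct proof of the discrete-quotient uniqueness that underlies Remark \ref{choice discrete}.a and Proposition \ref{uniqueness separation}, and it is in fact sharper than what the proposition gives, since it yields literal equality of topologies rather than a topological isomorphism. Your case (b) double-dualization is precisely the mechanism inside the proof of Proposition \ref{uniqueness separation}.a. So the approach matches; the only issue is the obstacle you flag at the end, which is a genuine gap as your sketch stands.

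Concretely: after dualizing, $(G^*,\tau)^*$ and $(G^*,\tau_{co})^*$ are a priori different subgroups of $\Hom(G^*,\mathbb{T})$ (a character of $G^*$ can be $\tau$-continuous without being $\tau_{co}$-continuous), both sitting in extensions of the discrete group $B$ by $A_{\Haus}$. The discrete-quotient argument then pins down the topology on each of these two groups separately, and biduality returns only an abstract isomorphism $(G^*,\tau)\cong(G^*,\tau_{co})$ in $\LCA$, not the statement that the identity of $G^*$ is a homeomorphism; nothing in your sketch rules out two distinct topologies whose completions happen to be isomorphic. The clean way to close this, staying inside the paper's toolkit, is to compare the two topologies through the identity map rather than through their duals: the commutative diagram whose rows are $0\to B^*\to (G^*,\tau)\to A^*\to 0$ and $0\to B^*\to (G^*,\tau_{co})\to A^*\to 0$, with $\id_{B^*}$, $\id_{G^*}$, $\id_{A^*}$ as vertical maps, satisfies the hypotheses of Proposition \ref{Five-Lemma nagao}.b (the subobject $B^*$ is Hausdorff compact, so $\s{E}_1$ satisfies condition (a) of Lemma \ref{haus exactness}, and $(G^*,\tau_{co})$ is Hausdorff because Hausdorffness is an extension property). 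Hence $\id_{G^*}:(G^*,\tau)\to(G^*,\tau_{co})$ is continuous, and by symmetry it is a homeomorphism, i.e.\ $\tau=\tau_{co}$. This bypasses entirely the need to identify the underlying groups of the duals. With that substitution (or an equivalent argument) your case (b) is complete; the existence step and the ``moreover'' step are fine as written.
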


Another application of Pontryagin duality using Proposition \ref{nagao theorem} is the following.

\begin{proposition} \label{Five-Lemma nagao}
	Let
	\begin{equation*}
		\begin{tikzcd}
			\s{E}_1: 0 \ar{r} & A_1 \ar{r}{\iota_1} \ar{d}{\alpha} & G_1 \ar{r}{\pi_1} \ar{d}{\gamma} & B_1 \ar{r} \ar{d}{\beta} & 0 \\
			\s{E}_2: 0 \ar{r} & A_2 \ar{r}{\iota_2} & G_2 \ar{r}{\pi_2} & B_2 \ar{r} & 0
		\end{tikzcd}
	\end{equation*}
	be a commutative diagram of abelian groups. Assume that the rows of the above diagram are topological extensions of first countable and locally compact abelian topological groups. Then, in each of the following cases:
	\begin{enumerate}[nosep, label=\alph*)]
		\item $B_1$ is discrete or
		\item $A_2$ is Hausdorff compact and $\s{E}_1$ satisfies some condition of Lemma \ref{haus exactness},
	\end{enumerate}
	the map $\gamma_{\Haus}$ is well-defined and continuous whenever $\alpha$ and $\beta$ are continuous . In particular, if we further assume that $G_2$ is Hausdorff, then $\gamma$ is continuous if, and only if, $\alpha$ and $\beta$ are continuous.
\end{proposition}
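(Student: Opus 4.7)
My strategy is to handle the two cases separately, reducing case (b) to case (a) via Pontryagin duality.

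For case (a), when $B_1$ is discrete, the conclusion is an immediate consequence of Corollary \ref{P3 discrete}: since $\alpha$ and $\beta$ are continuous and $B_1$ is discrete, $\gamma$ itself is continuous, whence $\gamma_{\Haus}$ is well-defined and continuous by functoriality of the separation.

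For case (b), the strategy is to dualize via Pontryagin and apply case (a) on the dual diagram. Both $\s{E}_1$ (by the assumed condition) and $\s{E}_2$ (using case (a) of Lemma \ref{haus exactness}, since $A_2$ is Hausdorff compact) satisfy the hypotheses of Lemma \ref{haus exactness}, so the dual sequences
\[ \s{E}_i^*: 0 \to B_i^* \to G_i^* \to A_i^* \to 0 \]
are topological extensions in $\LCA$. Since $A_2$ is Hausdorff compact, $A_2^*$ is discrete by Proposition \ref{Pontryagin duality theorem}. The continuity of $\alpha$ and $\beta$ yields the continuity of the dual maps $\alpha^*\colon A_2^* \to A_1^*$ and $\beta^*\colon B_2^* \to B_1^*$, via Remark \ref{continuity pontryagin} applied to $\alpha_{\Haus}$ and $\beta_{\Haus}$ combined with the natural identifications $X^* \cong X_{\Haus}^*$. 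In the resulting dual diagram, $A_2^*$ now plays the role of the discrete group, so Corollary \ref{P3 discrete} applied to this diagram yields a continuous homomorphism $\gamma^*\colon G_2^* \to G_1^*$. Since each $G_i^*$ lies in $\LCA$, Remark \ref{continuity pontryagin} then gives the continuity of the double dual $\gamma^{**}$, and under the canonical identification $(G_i^*)^* \cong (G_i)_{\Haus}$ for locally compact $G_i$, this map coincides with $\gamma_{\Haus}$. Hence $\gamma_{\Haus}$ is well-defined and continuous.

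For the final ``if and only if'': when $G_2$ is Hausdorff, $\gamma$ coincides with $\gamma_{\Haus}$, so its continuity follows from the previous paragraph; conversely, continuity of $\gamma$ forces that of $\alpha$ (the restriction along the topological embedding $\iota_1$) and of $\beta$ (the factorization through the quotient map $\pi_1$). The main obstacle will be in case (b): verifying that $\gamma$ genuinely induces a homomorphism $\gamma^*\colon G_2^* \to G_1^*$, i.e.\ that $\chi \circ \gamma$ is continuous for every $\chi \in G_2^*$. This is equivalent to the well-definedness of $\gamma_{\Haus}$, and is likely to require a careful section-theoretic argument leveraging the structural description of $\overline{\{0_{G_i}\}}$ afforded by Lemma \ref{haus exactness} together with the Hausdorff compactness of $A_2$ (which ensures that $s_i|_{\overline{\{0_{B_i}\}}}$ is a group homomorphism onto $\overline{\{0_{G_i}\}}$ and reduces the verification to a statement about the restriction of the cocycle $\sigma_{s_1,s_2}$ to $\overline{\{0_{B_1}\}}$).
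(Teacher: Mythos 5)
Your write-up reproduces the paper's proof essentially verbatim: case (a) is Corollary \ref{P3 discrete}, and case (b) dualizes the diagram, notes that $A_2^*$ is discrete, applies case (a) to get $\gamma^*$ continuous, and recovers $\gamma_{\Haus}$ as $\gamma^{**}$. The paper's own proof is exactly this and, like you, it does not justify the point you flag at the end: why $\chi\circ\gamma$ is continuous for every $\chi\in G_2^*$, i.e.\ why $\gamma^*$ lands in $G_1^*=\Hom_{\cts}(G_1,\mathbb{T})$ rather than merely in $\Hom(G_1,\mathbb{T})$. So the comparison verdict is: same route, same unproved step.

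You are right that this step is the crux, but it is worse than an obstacle requiring ``a careful section-theoretic argument'' --- it is circular and in fact can fail. Writing $G_i\cong(A_i\times B_i,+_{s_i},\tau_{A_i,B_i})$ via Nagao sections, a homomorphism $\phi:G_1\to\mathbb{T}$ is continuous iff $\phi\circ\iota_1$ and $\phi\circ s_1$ are continuous at $0$; unwinding, and using that $\iota_2^*:G_2^*\to A_2^*$ is surjective, one finds that $\gamma^*(G_2^*)\subseteq G_1^*$ holds iff $\rho\circ\sigma_{s_1,s_2}$ is continuous at $0_{B_1}$ for every $\rho\in A_2^*$. Since $A_2$ is compact Hausdorff, its topology is the initial topology induced by its characters, so this is equivalent to $\sigma_{s_1,s_2}$ being continuous at $0_{B_1}$, i.e.\ to the compatibility of $s_1$ and $s_2$ --- which by Proposition \ref{P3 generalized} is essentially the continuity of $\gamma$ one is trying to prove. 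Moreover the step genuinely fails under the stated hypotheses: take $\s{E}_1=\s{E}_2$ the split extension $0\to\mathbb{T}\to\mathbb{T}\times\R\to\R\to0$ with the product topology, $\alpha=\id_{\mathbb{T}}$, $\beta=\id_{\R}$, and $\gamma(a,b)=(a+\sigma(b),b)$ where $\sigma:\R\to\mathbb{T}$ is a discontinuous abstract homomorphism (built from a Hamel basis). This is a commutative diagram of abelian groups, the rows are topological extensions of first countable locally compact groups, $A_1=A_2=\mathbb{T}$ is Hausdorff compact, and $\alpha,\beta$ are continuous; yet everything is Hausdorff, so $\gamma_{\Haus}=\gamma$, which is discontinuous. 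Hence no argument can close the gap without an additional hypothesis tying $\gamma$ to the sections --- compatibility of $s_1,s_2$, or $\gamma\circ s_1=s_2\circ\beta$ as in Proposition \ref{P func} --- and the same caveat applies to the paper's proof of case (b).
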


\begin{proof}
	By Proposition \ref{nagao theorem}, there exists a topologizing section $s_i$ of $\pi_i$ such that the topology of $G_i$ is $\tau_{\s{E}_i,s_i}$ ($i=1,2$). For the case $a)$, we know that $s_1$ and $s_2$ are compatible since $B_1$ is discrete (see Example \ref{exam comp}.b). Hence, by Corollary \ref{P3 discrete}, we conclude that $\gamma$ is continuous and, therefore, the same holds for $\gamma_{\Haus}$. Assume now that we are in case $b)$. By Lemma \ref{haus exactness}, we obtain the following commutative diagram of abelian groups whose rows are topological extensions
	\begin{equation*}
		\begin{tikzcd}
			\s{E}_1: 0 \ar{r} & B_2^* \ar{r}{\pi_2^*} \ar{d}{\beta^*} & G_2^* \ar{r}{\iota_2^*} \ar{d}{\gamma^*} & A_2^* \ar{r} \ar{d}{\alpha^*} & 0 \\
			\s{E}_2: 0 \ar{r} & B_1^* \ar{r}{\pi_1^*} & G_1^* \ar{r}{\iota_1^*} & A_1^* \ar{r} & 0.
		\end{tikzcd}
	\end{equation*}
	Since $A_2$ is Hausdorff and compact, $A_2^*$ is discrete. Thus, by the previous case, we conclude that $\gamma^*$ is continuous since $\alpha^*$ and $\beta^*$ are continuous. Consequently, $(\gamma^*)^*=\gamma_{\Haus}$ is continuous. The last assertion of the proposition follows from the fact that, when $G_2$ is Hausdorff, the equality $\gamma=\gamma_{\Haus}\circ q_{G_1}$ holds.
\end{proof}


\section{Topological Five-Lemma}

We finish this section showing a version of the classical five-lemma in the context of abelian topological groups. 

\begin{theorem}[Topological Five-Lemma] \label{five-lemma theorem}
	Let
	\begin{equation*} 
		\begin{tikzcd}
			A_1 \ar{r}{f_1} \ar{d}{\alpha} & B_1 \ar{r}{g_1} \ar{d}{\beta} & C_1 \ar{r}{h_1} \ar{d}{\gamma} & D_1 \ar{r}{k_1} \ar{d}{\delta} & E_1 \ar{d}{\epsilon} \\
			A_2 \ar{r}{f_2} & B_2 \ar{r}{g_2} & C_2 \ar{r}{h_2} & D_2 \ar{r}{k_2} & E_2
		\end{tikzcd}
	\end{equation*}
	be a commutative diagram of abelian groups, where all rows are strict exact exact sequence of locally compact abelian topological groups. Suppose that
	\begin{enumerate}[nosep,label=\roman*)]
		\item $B_i$ and $D_i$ are first countable,
		\item $\beta$ and $\delta$ are topological isomorphisms,
		\item $\epsilon$ is injective and
		\item $\alpha$ is surjective.
	\end{enumerate}
	Then, in each of the following cases
	\begin{enumerate}[nosep, label=\alph*)]
		\item $D_i$ is discrete; or
		\item $B_i$ is Hausdorff and compact,
	\end{enumerate}
	$\gamma_{\Haus}$ is well-defined and it is a continuous surjective map. Whence $\gamma$ is a continuous bijection when $C_2$ is Hausdorff.
\end{theorem}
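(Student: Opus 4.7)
The plan is to reduce the given $5 \times 2$ diagram to a single $3 \times 2$ diagram of short topological extensions and then invoke Proposition \ref{Five-Lemma nagao}. First I would observe that the classical algebraic Five-Lemma, applied to the underlying diagram in $\Ab$, already yields that $\gamma$ is a bijection of abelian groups (the algebraic hypotheses on $\alpha, \beta, \delta, \epsilon$ are exactly the standard ones). Hence surjectivity of $\gamma_{\Haus}$ will follow immediately from surjectivity of $\gamma$ and of $q_{C_2}$, and the bijectivity claimed when $C_2$ is Hausdorff will be free.

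From each row I would extract the short exact sequence
\[ \s{E}_i: 0 \to B_i/\ker g_i \to C_i \to \ker k_i \to 0, \]
where the first arrow is $g_i$ corestricted to $\im g_i = \ker h_i \subseteq C_i$ (strict since $g_i$ is) and the second is $h_i$ corestricted to $\im h_i = \ker k_i \subseteq D_i$ (strict since $h_i$ is). The outer terms are locally compact and first countable: $B_i/\ker g_i$ is a quotient of the locally compact first countable $B_i$ under the open projection, and $\ker k_i$ is locally compact as a quotient of $C_i$ by $\ker h_i$ and first countable as a subspace of $D_i$. By the extension-property list in Section 2 applied to $\s{E}_i$, the middle term $C_i$ is therefore first countable as well (it is already locally compact by hypothesis). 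The vertical maps $\tilde\beta: B_1/\ker g_1 \to B_2/\ker g_2$ and $\tilde\delta: \ker k_1 \to \ker k_2$ induced respectively by $\beta$ and by the restriction of $\delta$ are topological isomorphisms, and the resulting diagram commutes.

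It then suffices to check that one of the two cases of Proposition \ref{Five-Lemma nagao} applies to this reduced diagram. In case (a) of the present theorem, $D_i$ being discrete forces $\ker k_i$ to be discrete, so case (a) of the proposition applies. In case (b), $B_i$ is Hausdorff compact; by Remark \ref{closed image} applied to the strict map $f_i$ (with $A_i$ locally compact and $B_i$ Hausdorff), the subgroup $\ker g_i = \im f_i$ is closed in $B_i$, so $B_i/\ker g_i$ is Hausdorff and compact. This makes $B_2/\ker g_2$ Hausdorff compact (as case (b) of the proposition requires) and simultaneously places $\s{E}_1$ under condition (a) of Lemma \ref{haus exactness}. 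In either situation Proposition \ref{Five-Lemma nagao} delivers continuity of $\gamma_{\Haus}$, which combined with the surjectivity already noted gives the first half of the theorem. When $C_2$ is Hausdorff we have $C_{2,\Haus} = C_2$, so the identity $\gamma = \gamma_{\Haus} \circ q_{C_1}$ exhibits $\gamma$ as a continuous bijection.

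The main obstacle I expect is the reduction bookkeeping: confirming that the short extensions $\s{E}_i$ genuinely inherit first countability, local compactness, and (in case (b)) Hausdorff compactness in the outer terms, and that $\tilde\beta$ and $\tilde\delta$ are topological isomorphisms. Once these verifications are made, the application of Proposition \ref{Five-Lemma nagao} is immediate.
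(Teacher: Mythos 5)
Your proposal is correct and follows essentially the same route as the paper: both extract the short topological extensions $0 \to B_i/\im f_i \to C_i \to \im h_i \to 0$ (your $B_i/\ker g_i$ and $\ker k_i$ are the same groups), verify the outer terms are first countable, locally compact, and in case (b) Hausdorff compact via closedness of $\im f_i$, and then apply Proposition \ref{Five-Lemma nagao}, with surjectivity of $\gamma_{\Haus}$ coming from the classical Five-Lemma. The only differences are cosmetic (naming the outer terms by kernels rather than images, and explicitly citing Remark \ref{closed image} for the closedness of $\im f_i$, which the paper leaves implicit).
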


\begin{proof}
	By the (classical) Five-Lemma, the map $\gamma$ is an isomorphism of groups. On the one hand, we have the following commutative diagram of abelian topological groups
	\begin{equation*}
		\begin{tikzcd}
			B_1 \ar{r}{\pi_1} \ar{d}{\beta} & B_1/\im f_1 \ar{r} \ar{d}{\beta'} & 0\\
			B_2 \ar{r}{\pi_2} & B_2/\im f_2 \ar{r} & 0,
		\end{tikzcd}
	\end{equation*}
	where $B_i/\im f_i$ is equipped with the quotient topology. Then the map $\beta'$ is continuous since $\beta$ is so. On the other hand, endowing $\im h_i$ with the subspace topology from $D_i$, the restriction $\delta':\im h_1\to \im h_2$ of $\delta$ is continuous since $\delta$ is so. Thus, we obtain the following commutative diagram of abelian topological groups whose rows are strict exact
	\begin{equation*}
		\begin{tikzcd}
			\s{E}_1: 0 \ar{r} & B_1/\im f_1 \ar{r}{g_1'} \ar{d}{\beta'} & C_1 \ar{r}{h_1'} \ar{d}{\gamma} & \im h_1 \ar{r} \ar{d}{\delta'} & 0 \\
			\s{E}_2: 0 \ar{r} & B_2/\im f_2 \ar{r}{g_2'} & C_2 \ar{r}{h_2'} & \im h_2 \ar{r} & 0,
		\end{tikzcd}
	\end{equation*}
	where $\beta'$ and $\delta'$ are continuous bijections. Note that $B_i/\im f_i$ and $\im h_i$ are first countable locally compact topological groups. Further, in case $b)$, the quotient $B_i/\im f_i$ is Hausdorff compact since $\im f_i$ is closed. Then, by Proposition \ref{Five-Lemma nagao}, $\gamma_{\Haus}$ is well-defined and it is continuous. Since $\gamma$ is bijective, $\gamma_{\Haus}$ is also surjective and, therefore, $\gamma_{\Haus}$ is a continuous surjective map. Finally, if $C_2$ is Hausdorff, then $\gamma=\gamma_{\Haus}\circ q_{C_1}$. Hence, $\gamma$ is continuous bijection.
\end{proof}

\begin{remark}
	\noindent
	\begin{enumerate}[nosep, label=\alph*.]
		\item Note that $\alpha$ and $\epsilon$ are not assumed to be continuous.
		\item We may relax condition $i)$ by assuming that $\beta$ and $\delta$ are continuous isomorphisms of abelian groups (that is, their inverses need not be continuous). Furthermore, when $C_1$ is second countable, checking the continuity of $\gamma_{\Haus}$ is sufficient for concluding that $\gamma_{\Haus}$ is a quotient map (see Remark \ref{strict}.d). In particular, when $C_2$ is Hausdorff and $C_1$ is second countable, the map $\gamma$ is an topological isomorphism.
	\end{enumerate}
\end{remark}

\end{document}